\newtheorem{theorem}{Theorem}[section]
\newtheorem{corollary}[theorem]{Corollary}
\newtheorem{lemma}[theorem]{Lemma}
\newtheorem{proposition}[theorem]{Proposition}
\theoremstyle{definition}
\newtheorem{definition}[theorem]{Definition}
\newtheorem{remark}[theorem]{Remark}
\newcommand{\R}{\mathbb{R}}
\newcommand{\N}{\mathbb{N}}
\newcommand{\bd}{\mathrm{bdry}\,}
\newcommand{\epi}{\mathrm{epi} }
\newcommand{\gh}{\mathrm{graph}\,}
\numberwithin{equation}{section}
\title[Strict Hadamard  differentiability of sets]{Geometric characterizations of the strict Hadamard differentiability of sets}
\author[A. Jourani]{Abderrahim Jourani}
\address[A. Jourani]{ \\ \small Universit\'e de  Bourgogne Franche-Comt\'e\\ \small Institut de Math\'ematiques de Bourgogne \\ \small UMR 5584, CNRS, , 21000 Dijon, France\\}
\email{{\tt \small abderrahim.jourani@u-bourgogne.fr}}
\author[M. Sene]{Moustapha Sene}
\address[M. Sene]{\small Moustapha S\'ene\\
\small Universit\'e Gaston Berger\\
\small D\'epartement de Math\'ematiques\\
\small Saint-Louis, S\'en\'egal\\}
\email{\tt moustapha2.sene@ugb.edu.sn}
\keywords{Epi-Lipschitzian set, Compactly epi-Lipschitzian set, Clarke Tangent cone, Contingent cone, Clarke subdifferential}
\subjclass[2000]{Primary   49J52,
46N10, 58C20; Secondary   34A60}
\begin{document}

\begin{abstract}
Let $S$ be a closed subset of a Banach space $X$. Assuming that $S$ is epi-Lipschitzian at $\bar{x}$ in the   boundary $ \bd S$ of $S$, 
 we  show  that $S$ is strictly Hadamard  differentiable at $\bar{x}$ IFF the Clarke tangent cone $T(S, \bar{x})$ to $S$ at $\bar{x}$ contains a closed  hyperplane IFF the Clarke  tangent cone $T(\bd S, \bar{x})$ to $\bd S$ at $\bar{x}$  is a closed hyperplane. Moreover when $X$ is of finite dimension, $Y$ is a  Banach space and  $g: X \mapsto Y$ is a locally Lipschitz mapping around $\bar{x}$,
  we show that $g$ is strictly Hadamard  differentiable at $\bar{x}$  IFF $T(\mathrm{graph}\,g, (\bar{x}, g(\bar{x})))$ is isomorphic to $X$ IFF the set-valued mapping $x\rightrightarrows K(\gh g, (x, g(x)))$ is continuous at $\bar{x}$  and $K(\gh g, (\bar{x}, g(\bar{x})))$ is isomorphic to $X$, where $K(A, a)$ denotes the contingent cone to a set $A$ at $a \in A$. 

\end{abstract}

\maketitle

\section{Introduction}
\label{intro}
Let $X$ be a real normed vector space and $S$ be a closed subset with $\bar{x}\in S$. A vector $v\in X$ is said to be a {\it Clarke tangent vector} to $S$ at $\bar{x}$ if for any neighborhood $V$ of $v$ there exist a neighborhood $U$ of $\bar{x}$ and some $\lambda>0$ such that for all 
$(t,x)\in ]0,\lambda[\times (U\cap C)$ we have 
$$(x+tV)\cap S\neq\emptyset.$$
The characterization in terms of sequences can be stated as follows :  $v\in X$ is a Clarke tangent vector to $S$ at $\bar{x}$ if and only if for any sequence 
$(x_n)_{n\in\N}$ of $S$ converging to $\bar{x}$ and any sequence of positive reals $(t_n)_{n\in\N}$  converging to $0$, there exists a sequence $(v_n)_{n\in\N}$ in $X$ converging to $v$ such that $ x_n + t_nv_n\in S$ for all $n\in \N$ sufficiently large. The collection  $T(S,\bar{x})$ of all such vectors is a closed convex cone which is called the Clarke tangent cone to $S$ at $\bar{x}$ (see \cite{7}). When $\bar{x}\notin S$ one puts,  $T(S,\bar{x})=\emptyset$.\\
The Clarke tangent cone is involved in many geometrical representation of sets such as locally compact and definable sets in $\R^n$. For instance in \cite{2} the authors proved that a nonempty locally compact subset $S\subset\R^n$ is a $C^1$-manifold if and only if the Clarke tangent cone and the upper paratangent cone to $S$ coincide at every point. More historical characterizations of $C^1$-manifolds in Euclidean spaces by tangent cones are reviewed in \cite{1}. 
More recently, the authors in \cite{20} have established a geometrical characterization of definable sets in terms of tangent cones.\\
It is our purpose in this paper to provide  geometrical characterizations of the strict Hadamard  differentiability of epi-Lipschitz and non epi-Lipschitz sets  by tangent cones. Indeed we show that an epi-Lipschitz set $S$ at $\bar{x}$ is strictly Hadamard      differentiable at $\bar{x}$ if and only if the Clarke tangent cone to $S$ at $\bar{x}$ contains a closed hyperplane if and only if  the Clarke  tangent cone $T(\bd S, \bar{x})$ to the boundary of $S$ at $\bar{x}$ is a closed hyperplane. 
Moreover in finite dimension,  we establish that a mapping $g:X\rightarrow Y$ is strictly Hadamard  differentiable at $\bar{x}$ if and only if $T(\mathrm{graph}\,g, (\bar{x}, g(\bar{x})))$ is isomorphic to $X$. Further corollaries are provided throughout the paper.\\
The paper is organized as follows. Section 2 is devoted to some nonsmooth analysis tools. In section 3, we present epi-Lipschitz sets and define the concepts of strictly Hadamard  differentiability of sets. Then, we study the inverse image of an epi-Lipschitz set by a strictly Hadamard  differentiable mapping. Section 4 and 5 are respectively devoted to the strict Hadamard  differentiability of epi-Lipschitz and non epi-Lipschitz sets.
\section{Tools of nonsmooth analysis}

Let $S$ be a subset of a vector normed space $X$. The Clarke normal cone to $S$ at $\bar{x}\in S$ is the negative polar of $T(S, \bar{x})$, that is 
$$N(S, \bar{x}) = \{ x^*\in X^* : \langle x^*, h\rangle \leq 0,\, \forall\, h\in T(S, \bar{x})\}$$
where $X^*$ denotes the topological dual space of $X$ and $\langle \cdot, \cdot\rangle$ is the pairing between $X$ and $X^*$. Having defined this object, we may now introduce the Clarke subdifferential. Let $f : X \mapsto \R\cup\{ +\infty\}$ be a lower semicontinuous function and $\bar{x}\in X$, with $f(\bar{x})<\infty$. The Clarke subdifferential of $f$ at $\bar{x}$ is given  (\cite{8}) by 
$$\partial f(\bar{x}) = \{ x^*\in X^* : \quad (x^*, -1) \in N(\hbox{epi} f, (\bar{x}, f(\bar{x})))\}$$
where $\hbox{epi} f$ denotes the epigraph of $f$, that is,
$$\hbox{epi} f = \{ (x, r)\in X\times \R : \quad f(x) \leq r\}.$$
It is natural to put $\partial f(\bar{x})  = \emptyset$ whenever $f(\bar{x})=\infty$. When $f$ is locally Lipschitz around $\bar{x}$, the Clarke subdifferential has the following equivalent definition
$$\partial f(\bar{x}) = \{ x^*\in X^* : \quad \langle x^*, h\rangle \leq \limsup_{{t\to 0^+}\atop{x\to \bar{x}}} {{f(x+th)-f(x)}\over t}\quad \forall\, h\in X\}.$$
It is known (\cite{8}), that whenever $f$ is locally Lipschitz around $\bar{x}$, the following equivalence holds true 
$$\partial f(\bar{x}) \hbox{ is a singleton IFF }  f  \hbox{ is strictly Hadamard differentiable at $\bar{x}$}.$$
We recall  that a mapping $g : X \mapsto Y$, with $Y$ a vector normed space, is strictly Hadamard differentiable at $\bar{x}$ if there exists a linear continuous mapping $Dg(\bar{x}) : X\to Y$ such that 
$$\lim_{{{t \to 0^+}\atop{x\to \bar{x}}}\atop{u\to h}} {{g(x+tu) - g(x)}\over t} = Dg(\bar{x})(h)$$
The mapping $Dg(\bar{x})$ is called  the strict Hadamard  derivative of $g$ at $\bar{x}$. 

In order to apply  Graves Theorem, we need an other concept of differentiability. We recall (\cite{22}) that a mapping $g : X \mapsto Y$, with $Y$ a vector normed space, is strictly Fr\'echet differentiable at $\bar{x}$ if 
$$\lim_{{x\to \bar{x}}\atop{x'\to \bar{x}}} {{g(x) - g(x') - Dg(\bar{x})(x-x')}\over {\Vert x-x'\Vert}} = 0$$
where $Dg(\bar{x})$ denotes the strict Fr\'echet derivative of $g$ at $\bar{x}$. \\
It is not difficult to see that each strictly Fr\'echet  differentiable mapping is strictly  Hadamard  differentiable and both concepts coincide in finite dimensional spaces for locally Lipschitz mappings. This equivalence is no longer true in the  infinite dimensional situation. Borwein \cite{5} have established an equivalence of both concepts for locally Lipschitz functions with an additional condition. 

\section{Epi-Lipschitz sets}
Let $S$ be a subset of a normed space $X$. Following Rockafellar (see. \cite{22,23}), $S$ is said to be epi-Lipschitzian at $\bar{x} \in S$ in a direction $\bar{v}\neq0$  if there exists   $\gamma>0$  such that the following inclusion  holds 
$$S\cap B(\bar{x}, \gamma) + t B(\bar{v}, \gamma)\subset S\quad \forall\, t\in ]0, \gamma[.$$
If $S$ is epi-Lipschitzian at any of its points then it is called epi-Lipschitzian.\\
Assume that $S$ is epi-Lipschitzian at $\bar{x}$ in a direction $\bar{v}\neq0$. In the case of finite dimensional real normed spaces, Rockafellar (cf. \cite[$\S$ 4]{24}) proved the existence of a closed hyperplane $E$ such that
$X=E\oplus\R\bar{v}$, a neighbourhood $\Omega$ of $\bar{x}$ in $X$ and a function $f:E\rightarrow \R$ locally Lipschitzian near $\pi_E(\bar{x})$ (where $\bar{x}=\pi_E(\bar{x})+\pi_{\bar{v}}(\bar{x})$ with 
$\pi_E(\bar{x})\in E$ and $\pi_{\bar{v}}(\bar{x})\in \R\bar{v}$ uniquely defined) such that
$$\Omega\cap S=\Omega\cap\{u+r\bar{v}:u\in E,r\in\R, f(u)\leq r\}.$$
Further he observed in \cite[$\S$ 5]{25} that if $S$ is closed then the arguments in \cite{24} are also true in the case of infinite dimensional real normed space. 
Having this, one can write
\begin{equation}\label{epiphi}
\Omega\cap S=\Omega\cap \varphi(\epi\,f)\quad\mbox{where}\quad \varphi(u,r):=u+r\bar{v}
\end{equation}
This means that $S$ can locally be represented as the epigraph of locally Lipschitz function $f$.
\begin{eqnarray}\label{local}
\hbox{\it  We call $f$ a local representation of $S$ near  $\bar{x}$.}
\end{eqnarray}
In \cite{24}, Rockafellar showed that 
$$\hbox{a closed set $S\subset X$ is epi-Lipschitzian at $\bar{x}$ IFF int$T(S, \bar{x}) \neq \emptyset$}$$
 provided that $X$ is of finite dimension. Borwein and Strojwas \cite{3} established that this characterization remains valid in infinite dimensional Banach spaces provided that $S$ is compactly epi-Lipschitzian at $\bar{x}$. Following Borwein and Strojwas  \cite{3}, a set $S \subset X$ is said  to be 
compactly epi-Lipschitzian (CEL) at $\bar{x}$ if there exist $\gamma>0$ and a norm-compact set $H$ such that
\begin{eqnarray}\label{CEL}
S\cap B(\bar{x}, \gamma) + tB(0, 1) \subset S + tH, \, \forall\; t\in [0, \gamma].
\end{eqnarray}
It is clear that this class of sets  encompasses that of epi-Lipschitz sets. Note that in finite dimensions, every nonempty set is CEL. \\
Further representations of closed epi-Lipschitz sets in terms of sub-level of Lipschitz functions are established in Czarnecki and Thibault \cite{10}. We recall some of them in the following.\\
Let $X$ be a real normed space and $S\subset X$. The {\it signed distance function} to $S$ is defined by
$$\Delta_S(x):=d_S(x)-d_{S^c}(x)\quad\mbox{for all}\quad x\in X,$$
where $S^c=X\setminus S$ and 
$$d_S(x) := \inf_{u\in S}\Vert u-x\Vert$$
is the distance function to the set  $S$.  It is easily seen that 
$$\mathrm{cl}S=\{x\in X: \Delta_S(x)\leq0\}\quad\mbox{and}\quad\bd S=\{x\in X:\Delta_S(x)=0\}.$$
Moreover it is known (see e.g.,\cite{13}) that $\Delta_S$ is $1$-Lipschitzian on $X$. Czarnecki and Thibault \cite{10} established the following local sublevel representation.
\begin{theorem}\label{tcz1}\cite{10}
Let $S$ be a subset of a real normed space $X$ and $\bar{x}\in S\cap\bd S$. Assume with $\overline{S}:=\mathrm{cl}S$ that $\mathrm{int}\overline{S}\cap U\subset S$ for some neighbourhood $U$ of $\bar{x}$. Then, the set $S$ is epi-Lipschitzian at $\bar{x}$ if and only if $0\notin\partial \Delta_S(\bar{x})$.
\end{theorem}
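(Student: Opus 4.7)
The plan is to pass through the Clarke directional derivative
\[
\Delta_S^\circ(\bar x; h) = \limsup_{t\to 0^+,\, x\to \bar x}\frac{\Delta_S(x+th)-\Delta_S(x)}{t},
\]
which is available because $\Delta_S$ is $1$-Lipschitz. Since $\partial\Delta_S(\bar x) = \{x^*:\langle x^*,h\rangle\leq \Delta_S^\circ(\bar x;h),\ \forall h\in X\}$, a Hahn--Banach separation makes $0\notin \partial\Delta_S(\bar x)$ equivalent to the existence of some $h\in X$ with $\Delta_S^\circ(\bar x;h)<0$. The whole proof then amounts to converting such a directional inequality into the epi-Lipschitz inclusion $S\cap B(\bar x,\gamma)+tB(\bar v,\gamma)\subset S$, and conversely.

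For the easier implication $(\Leftarrow)$, suppose $\Delta_S^\circ(\bar x;h)\leq -2\gamma<0$ for some $h$ and $\gamma>0$. I would choose $\eps>0$ such that $\Delta_S(x+th)-\Delta_S(x)\leq -t\gamma$ for all $x\in B(\bar x,\eps)$ and $t\in(0,\eps)$. For $x\in S\cap B(\bar x,\eps)$ one has $\Delta_S(x)\leq 0$, hence $\Delta_S(x+th)\leq -t\gamma$; since $d_S,d_{S^c}\geq 0$, this forces $d_{S^c}(x+th)\geq t\gamma$, i.e.\ $B(x+th,t\gamma)$ does not meet $S^c$, so $B(x+th,t\gamma)\subset S$. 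Rewriting yields $x+tB(h,\gamma)\subset S$, the epi-Lipschitz property at $\bar x$ in direction $h$ with parameter $\min(\eps,\gamma)$.

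For $(\Rightarrow)$, suppose $S\cap B(\bar x,\gamma)+tB(\bar v,\gamma)\subset S$ for $t\in(0,\gamma)$; the target is $\Delta_S^\circ(\bar x;\bar v)\leq -\gamma$. For $x\in\bar S$ close to $\bar x$, I would apply the epi-Lipschitz inclusion pointwise to every $y$ in $B(x,d_{S^c}(x))\subset S$ (noting $d_{S^c}(x)>0\Rightarrow x\in\mathrm{int}\,S$) to obtain $B(x+t\bar v,d_{S^c}(x)+t\gamma)\subset S$, hence $d_{S^c}(x+t\bar v)\geq d_{S^c}(x)+t\gamma$; combined with $d_S(x+t\bar v)\leq d_S(x)$ (pick $x'\in S$ close to $x$ and use $x'+t\bar v\in S$), this gives $\Delta_S(x+t\bar v)-\Delta_S(x)\leq -t\gamma$. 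For $x\notin\bar S$, approximate $x$ by $x'\in S$ with $\|x-x'\|\leq d_S(x)+s$ and use $B(x'+t\bar v,t\gamma)\subset S$ to estimate both $d_S$ and $d_{S^c}$ at $x+t\bar v$, arriving at $\Delta_S(x+t\bar v)-\Delta_S(x)\leq -t\gamma+s$; the extra $s\to 0$ is harmless since $d_S(x)\to 0$ as $x\to \bar x$.

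The main obstacle is exactly that case $x\notin\bar S$ in the forward direction: the naive $1$-Lipschitz bound $\Delta_S(x+t\bar v)-\Delta_S(x)\leq t\|\bar v\|$ is far too weak. The standing hypothesis $\mathrm{int}\,\bar S\cap U\subset S$ is what earns its keep here: it is precisely what forces $\mathrm{int}\,S=\mathrm{int}\,\bar S$ and therefore $\Delta_S=\Delta_{\bar S}$ in a neighbourhood of $\bar x$, so that the equivalence ``$B(y,r)\subset S\Leftrightarrow d_{S^c}(y)\geq r$'' behaves as it would for a genuinely closed set, which is exactly what makes the two distance estimates above propagate consistently.
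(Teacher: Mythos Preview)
The paper does not prove Theorem~\ref{tcz1}; it is quoted verbatim from Czarnecki--Thibault \cite{10} and used only as a black box (to feed Corollary~\ref{cor1} and Lemma~\ref{lem3}). There is therefore no in-paper proof to compare against.

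Your approach---reduce $0\notin\partial\Delta_S(\bar x)$ to the existence of a direction $h$ with $\Delta_S^\circ(\bar x;h)<0$, then translate that inequality into the epi-Lipschitz inclusion via the two distance functions---is exactly the natural one, and both implications are correct as sketched. Two small remarks. First, the equivalence $0\notin\partial\Delta_S(\bar x)\Longleftrightarrow\exists\,h:\Delta_S^\circ(\bar x;h)<0$ follows by substituting $x^*=0$ into the very definition of the Clarke subdifferential; no Hahn--Banach separation is needed. Second, in the forward direction you should say a word about $x\in\bar S$ with $d_{S^c}(x)=0$ (boundary points of $S$), where the ball $B(x,d_{S^c}(x))$ degenerates; this is handled by the same approximation $x'\in S$, $\|x-x'\|\leq s$, that you use for $x\notin\bar S$.

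Your closing paragraph misidentifies where the standing hypothesis $\mathrm{int}\,\bar S\cap U\subset S$ enters. The equivalence ``$d_{S^c}(y)\geq r\Longleftrightarrow$ open $B(y,r)\subset S$'' holds for \emph{any} subset $S$, so it does not need the hypothesis; and your actual estimates in both directions go through without ever invoking it. The observation that the hypothesis forces $\mathrm{int}\,S=\mathrm{int}\,\bar S$ locally, hence $\Delta_S=\Delta_{\bar S}$ near $\bar x$, is correct and is indeed how Czarnecki--Thibault reduce to the closed case, but in your argument it is not the load-bearing step you present it as. This does not make your proof wrong---it just means the explanation of the hypothesis is off.
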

This allows to state the following known result.
\begin{corollary}\label{cor1}
Let $S$ be a subset of a normed space $X$ which is epi-Lipschitzian at $\bar{x}\in S\cap\bd S$. Then, the Clarke normal cone of $S$ at $\bar{x}$ can be described as
$$N(S,\bar{x})=\R_+\partial\Delta_S(\bar{x}),$$
where $\R_+:=[0,+\infty[$.
\end{corollary}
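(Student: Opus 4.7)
The plan is to combine Theorem~\ref{tcz1} with the classical Lagrangian description of the Clarke normal cone to a Lipschitzian inequality constraint. Concretely, I would represent $S$ locally as the sub-level set $\{x : \Delta_S(x) \leq 0\}$, verify the constraint qualification supplied by Theorem~\ref{tcz1}, and then read off $N(S,\bar{x}) = \mathbb{R}_+\partial \Delta_S(\bar{x})$ from standard nonsmooth calculus.

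First, since $\bar{x}\in\bd S$ one has $\Delta_S(\bar{x})=0$, and the local epigraph representation~\eqref{epiphi} guarantees that $S$ coincides with $\{x:\Delta_S(x)\leq 0\}$ on some neighbourhood of $\bar{x}$ (and in particular the hypothesis $\mathrm{int}\,\overline{S}\cap U\subseteq S$ of Theorem~\ref{tcz1} is met). Thus Theorem~\ref{tcz1} supplies the constraint qualification $0\notin \partial \Delta_S(\bar{x})$. Next, I would appeal to the sub-level tangent-cone identity: for a locally Lipschitz function $g$ around $\bar{x}$ with $g(\bar{x})=0$ and $0\notin\partial g(\bar{x})$,
$$T(\{g\leq 0\},\bar{x})\;=\;\{h\in X: g^{\circ}(\bar{x};h)\leq 0\},$$
where $g^{\circ}(\bar{x};\cdot)$ denotes the Clarke generalized directional derivative. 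Specializing to $g=\Delta_S$, taking negative polars, and using that $\partial \Delta_S(\bar{x})$ is the weak-$\ast$ support set of the sublinear function $\Delta_S^{\circ}(\bar{x};\cdot)$, one obtains
$$N(S,\bar{x})\;=\;T(S,\bar{x})^{\circ}\;=\;\overline{\mathbb{R}_+\partial \Delta_S(\bar{x})}^{\,w^{\ast}}.$$
The qualification $0\notin \partial \Delta_S(\bar{x})$ together with the norm-boundedness of $\partial \Delta_S(\bar{x})$ (inherited from the $1$-Lipschitz property of $\Delta_S$) makes $\partial \Delta_S(\bar{x})$ a weak-$\ast$ compact convex set not containing the origin, and hence forces $\mathbb{R}_+\partial \Delta_S(\bar{x})$ to be weak-$\ast$ closed already, so the closure bar is redundant.

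The main obstacle is the sub-level tangent-cone identity, which is not immediate from the definitions. The nontrivial inclusion $\{h:\Delta_S^{\circ}(\bar{x};h)\leq 0\}\subseteq T(S,\bar{x})$ requires upgrading the ``approximate descent'' of $\Delta_S$ along $h$ into a genuine Clarke tangent vector, typically via a Lebourg mean value argument combined with the qualification direction $h_0$ satisfying $\Delta_S^{\circ}(\bar{x};h_0)<0$. The reverse inclusion is handled by a near-metric projection of any test sequence onto $S$, together with the sequence characterization of $h\in T(S,\bar{x})$ and the $1$-Lipschitz continuity of $\Delta_S$. Once this identity is in hand, the final dualization via the bipolar theorem is routine, and the corollary follows.
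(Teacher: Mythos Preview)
The paper does not actually supply a proof of this corollary: it is stated as a ``known result'' immediately after Theorem~\ref{tcz1} and is implicitly attributed to Czarnecki--Thibault~\cite{10}. So there is no argument in the paper to compare against; your sketch is precisely the kind of derivation one would expect to find behind such a citation, and its overall architecture (extract the constraint qualification $0\notin\partial\Delta_S(\bar x)$ from Theorem~\ref{tcz1}, invoke the sub-level tangent-cone identity, dualize, then remove the weak-$*$ closure using that $\partial\Delta_S(\bar x)$ is weak-$*$ compact and misses $0$) is correct.

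One step deserves more care than your sketch gives it. For a \emph{general} locally Lipschitz $g$ with $0\notin\partial g(\bar x)$ one only gets the inclusion $\{h:g^{\circ}(\bar x;h)\le 0\}\subset T(\{g\le 0\},\bar x)$; the reverse inclusion can fail, yet you need it to obtain $\partial\Delta_S(\bar x)\subset N(S,\bar x)$ (equivalently $\mathbb{R}_+\partial\Delta_S(\bar x)\subset N(S,\bar x)$). Your projection argument handles test points $x_n\notin S$, but for $x_n\in\mathrm{int}\,S$ the difference quotient picks up an uncontrolled term $d_{S^c}(x_n)/t_n$. This is exactly where the epi-Lipschitz hypothesis must be used a second time (not merely to supply the qualification): it forces $\mathrm{cl}(X\setminus S)$ to be epi-Lipschitz at $\bar x$ in the direction $-\bar v$, hence $-T(S,\bar x)\subset T(\mathrm{cl}\,S^c,\bar x)$. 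Then for $h\in T(S,\bar x)$ one has $d_S^{\circ}(\bar x;h)\le 0$ and $(-d_{S^c})^{\circ}(\bar x;h)=d_{S^c}^{\circ}(\bar x;-h)\le 0$, so the sum rule gives $\Delta_S^{\circ}(\bar x;h)\le 0$ and the missing inclusion follows. With this refinement your proof goes through.
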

\begin{theorem}\label{tcz2}\cite{10}
Let $g:X\rightarrow \R$ be a Lipschitz continuous function near $\bar{x}$ of the real normed space $X$ and let $S:=\{x\in X: g(x)\leq 0\}.$ Assume that $\bar{x}\in\bd S$ and $0\notin \partial g(\bar{x})$.
Then $S$ is epi-Lipschitzian at $\bar{x}$.
\end{theorem}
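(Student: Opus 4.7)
The plan is to verify the epi-Lipschitzian inclusion directly by producing an explicit epi-Lipschitz direction $\bar v$ from the hypothesis $0\notin \partial g(\bar x)$, rather than routing through Theorem~\ref{tcz1}. The key tool is Clarke's generalized directional derivative $g^\circ(\bar x;\cdot)$ and its support-function representation $g^\circ(\bar x;v)=\max_{x^*\in\partial g(\bar x)}\langle x^*,v\rangle$, valid because $g$ is Lipschitz near $\bar x$.

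First I would exploit the hypothesis $0\notin \partial g(\bar x)$. Since $g$ is locally Lipschitz, $\partial g(\bar x)$ is a nonempty, convex, and weak$^*$-compact subset of $X^*$. Because the topological dual of $(X^*,w^*)$ is canonically $X$, a strict separation theorem in locally convex spaces applied to the weak$^*$-compact convex set $\partial g(\bar x)$ and the point $0$ yields some $\bar v\in X$ and $\alpha>0$ with
$$\sup_{x^*\in \partial g(\bar x)}\langle x^*,\bar v\rangle \;\leq\; -\alpha \;<\;0,$$
hence $g^\circ(\bar x;\bar v)\leq -\alpha$. This single step is where the assumption is used, and in infinite dimensions it is the main technical point, because one must insist on separating with a functional coming from $X$ itself; the weak$^*$-compactness of $\partial g(\bar x)$ is exactly what legitimises this.

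Next, unpacking the definition of $g^\circ$, there exist $\delta>0$ such that for all $y\in B(\bar x,\delta)$ and all $t\in(0,\delta)$ one has $g(y+t\bar v)-g(y)\leq -\tfrac{\alpha}{2}\,t$. To obtain an open cone of admissible directions, I would use the Lipschitz constant $L$ of $g$ near $\bar x$: for any $v'=\bar v+h$,
$$g(y+tv')-g(y)\;\leq\; g(y+t\bar v)-g(y)+Lt\|h\| \;\leq\; \Bigl(-\tfrac{\alpha}{2}+L\|h\|\Bigr)t.$$
Choosing $\gamma\in(0,\delta)$ small enough that $L\gamma<\alpha/4$, this yields $g(y+tv')\leq g(y)$ for every $y\in B(\bar x,\gamma)$, every $v'\in B(\bar v,\gamma)$, and every $t\in(0,\gamma)$.

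Finally, restricting $y$ to $S\cap B(\bar x,\gamma)$ gives $g(y)\leq 0$, hence $g(y+tv')\leq 0$, i.e. $y+tv'\in S$. This is precisely the Rockafellar inclusion
$$S\cap B(\bar x,\gamma)+tB(\bar v,\gamma)\subset S\qquad\forall\,t\in(0,\gamma),$$
so $S$ is epi-Lipschitzian at $\bar x$ in the direction $\bar v\neq 0$. The only real obstacle is the separation step in the first paragraph; after that, everything is a direct manipulation of the Clarke directional derivative together with the Lipschitz modulus of $g$.
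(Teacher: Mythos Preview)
The paper does not give its own proof of this statement; Theorem~\ref{tcz2} is quoted verbatim from Czarnecki--Thibault \cite{10} and used as a black box, so there is no in-paper argument to compare against.

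Your proof is correct and is essentially the standard direct argument. The weak$^*$-separation step is the only delicate point and you handle it properly: since $\partial g(\bar x)$ is weak$^*$-compact and convex and $\{0\}$ is disjoint from it, strict separation in the locally convex space $(X^*,w^*)$ yields a separating functional that is automatically weak$^*$-continuous, hence represented by some $\bar v\in X$ with $g^\circ(\bar x;\bar v)=\sup_{x^*\in\partial g(\bar x)}\langle x^*,\bar v\rangle\le-\alpha<0$. From there the passage through the definition of $g^\circ$ and the Lipschitz perturbation to an open cone of directions is routine and yields exactly Rockafellar's inclusion. The only cosmetic omission is that when you invoke the Lipschitz estimate $|g(y+tv')-g(y+t\bar v)|\le Lt\|h\|$ you should also shrink $\gamma$ so that $B(\bar x,\gamma)+\gamma B(\bar v,\gamma)$ remains inside the ball on which $g$ is $L$-Lipschitz; this is a one-line adjustment and does not affect the argument.
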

\begin{lemma}\label{epi}\cite{10}
Let $S$ be a subset of a normed space $X$ which is epi-Lipschitzian at $\bar{x}\in S\cap\bd S$. Then, the following equality holds
$$T(\bd S,\bar{x})=T(S,\bar{x})\cap-T(S,\bar{x}),$$
which yields in particular that $T(\bd S,\bar{x})$ is a closed vector subspace of $X$.
\end{lemma}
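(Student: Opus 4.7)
The plan is to reduce the problem to a statement about the epigraph and graph of a locally Lipschitz function via the local representation \eqref{epiphi}. Since $S$ is epi-Lipschitzian at $\bar{x}$ in some direction $\bar{v}\neq 0$, there exist a decomposition $X=E\oplus\R\bar{v}$, a neighbourhood $\Omega$ of $\bar{x}$, and a locally Lipschitz $f:E\to\R$ with $\Omega\cap S=\Omega\cap\varphi(\epi f)$, where $\varphi(u,r):=u+r\bar{v}$. Since $f$ is continuous, after shrinking $\Omega$ one also has $\Omega\cap\bd S=\Omega\cap\varphi(\gh f)$. Because $\varphi$ is a linear homeomorphism of $E\times\R$ onto $X$, it commutes with the Clarke tangent cone operation, so the claim reduces to
$$T(\gh f,(u_0,f(u_0)))=T(\epi f,(u_0,f(u_0)))\cap-T(\epi f,(u_0,f(u_0)))$$
with $u_0=\pi_E(\bar{x})$. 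I then invoke the classical identification $T(\epi f,(u_0,f(u_0)))=\{(h,\rho):f^\circ(u_0;h)\leq\rho\}$, where $f^\circ(u_0;h):=\limsup_{y\to u_0,\,t\downarrow 0}[f(y+th)-f(y)]/t$ is sublinear in $h$. Since $0=f^\circ(u_0;0)\leq f^\circ(u_0;h)+f^\circ(u_0;-h)$, the intersection $T(\epi f)\cap-T(\epi f)$ collapses to $\{(h,\rho):\rho=f^\circ(u_0;h)=-f^\circ(u_0;-h)\}$, a closed vector subspace.

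For the inclusion $T(\gh f)\subset T(\epi f)\cap-T(\epi f)$, I argue directly from the sequential definition. Given $(h,\rho)\in T(\gh f,(u_0,f(u_0)))$ and an arbitrary sequence $(x_n,r_n)\in\epi f$ with $(x_n,r_n)\to(u_0,f(u_0))$ and $t_n\to 0^+$, the slack $s_n:=r_n-f(x_n)\geq 0$ tends to $0$, so the projected sequence $(x_n,f(x_n))\in\gh f$ still converges to $(u_0,f(u_0))$. Tangency of $(h,\rho)$ to $\gh f$ yields $(h_n,\rho_n)\to(h,\rho)$ with $f(x_n+t_nh_n)=f(x_n)+t_n\rho_n\leq r_n+t_n\rho_n$, which gives $(h,\rho)\in T(\epi f,\cdot)$. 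The analogous argument applied to the hypograph (or equivalently via the reflection $(u,r)\mapsto(u,-r)$ that conjugates $\mathrm{hypo}\,f$ to $\epi(-f)$) yields $(h,\rho)\in-T(\epi f,\cdot)$.

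For the reverse inclusion, let $(h,\rho)\in T(\epi f)\cap-T(\epi f)$, so that $\rho=f^\circ(u_0;h)=-f^\circ(u_0;-h)$. Given $(x_n,f(x_n))\in\gh f$ converging to $(u_0,f(u_0))$ and $t_n\to 0^+$, tangency of $(h,\rho)$ to $\epi f$ produces $(h_n,\rho_n')\to(h,\rho)$ with $f(x_n+t_nh_n)\leq f(x_n)+t_n\rho_n'$. I would then simply \emph{define} $\rho_n:=[f(x_n+t_nh_n)-f(x_n)]/t_n$, so that $(x_n+t_nh_n,f(x_n)+t_n\rho_n)\in\gh f$ tautologically, reducing the problem to showing $\rho_n\to\rho$.

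The main obstacle is precisely this last convergence. By the local Lipschitz property of $f$ near $u_0$ (with constant $L$), the quantity $\rho_n$ differs from $[f(x_n+t_nh)-f(x_n)]/t_n$ by a term bounded by $L\|h_n-h\|\to 0$, so the two sequences have the same asymptotic behaviour. The definition of $f^\circ(u_0;h)$ directly yields $\limsup_n[f(x_n+t_nh)-f(x_n)]/t_n\leq f^\circ(u_0;h)=\rho$. For the matching $\liminf$ bound, I perform the change of variable $y_n:=x_n+t_nh\to u_0$ and rewrite $[f(x_n)-f(x_n+t_nh)]/t_n=[f(y_n+t_n(-h))-f(y_n)]/t_n$; the definition of $f^\circ(u_0;-h)$ then gives a $\limsup$ bound of $-\rho$, whence $\liminf[f(x_n+t_nh)-f(x_n)]/t_n\geq\rho$. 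This forces $\rho_n\to\rho$, and hence $(h,\rho)\in T(\gh f,\cdot)$. The final assertion that $T(\bd S,\bar{x})$ is a closed vector subspace follows at once as the intersection of a closed convex cone with its negative.
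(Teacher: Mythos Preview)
The paper does not prove this lemma; it is quoted from Czarnecki--Thibault \cite{10} and stated without argument, so there is no in-paper proof to compare against. Your argument is a correct, self-contained proof.

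A few comments on the route you chose. The reduction via the local representation \eqref{epiphi} to the identity
\[
T(\gh f,(u_0,f(u_0)))=T(\epi f,(u_0,f(u_0)))\cap\bigl(-T(\epi f,(u_0,f(u_0)))\bigr)
\]
is legitimate: $\varphi$ is a linear isomorphism $E\times\R\to X$, hence carries Clarke cones to Clarke cones, and continuity of $f$ gives $\bd(\epi f)=\gh f$ locally, whence $\Omega\cap\bd S=\Omega\cap\varphi(\gh f)$ after shrinking $\Omega$. Your sequential verification of $T(\gh f)\subset T(\epi f)$ is clean; for the companion inclusion into $-T(\epi f)$ you invoke the hypograph, and it is worth recording explicitly the identity you are using, namely $T(\mathrm{hypo}\,f,(u_0,f(u_0)))=-T(\epi f,(u_0,f(u_0)))$, which follows from $(-f)^{\circ}(u_0;h)=f^{\circ}(u_0;-h)$ after the reflection $(u,r)\mapsto(u,-r)$. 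The delicate reverse inclusion is handled correctly: the change of variable $y_n=x_n+t_nh$ together with $f^{\circ}(u_0;-h)=-\rho$ gives the matching $\liminf$, and the Lipschitz estimate absorbs the drift $h_n-h$.

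By contrast, the toolkit of \cite{10} centres on the signed distance $\Delta_S$ (see Theorem~\ref{tcz1} and Corollary~\ref{cor1}), so their original proof presumably proceeds through the subdifferential calculus of $\Delta_S$ rather than through a graphical reduction. Your approach trades that machinery for the classical formula $T(\epi f,(u_0,f(u_0)))=\{(h,\rho):f^{\circ}(u_0;h)\le\rho\}$ and a direct sequential argument; it is arguably more elementary and keeps the geometry transparent.
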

Using the Ioffe-approximate subdifferential $\partial_A$ (see \cite{14} and \cite{15}), the author \cite{18} established the following result.
\begin{proposition}[Corollary 4.1 \cite{18}] Let $X$ be a Banach space and  $g:X\rightarrow \R\cup\{+\infty\}$ be a lower semicontinuous  function  around $\bar{x}$, with $g(\bar{x}) = 0$. Let $S:=\{x\in \R^n: g(x)\leq 0\}.$ Assume that $\bar{x}\in\bd S$ and $0\notin \partial_A g(\bar{x})$.
Then $S$ is CEL at $\bar{x}$ provided that $\epi g$ is CEL at $(\bar{x}, 0)$.
\end{proposition}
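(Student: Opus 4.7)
The plan is to realize $S$ as the horizontal slice $S=\{x\in X:(x,0)\in \epi g\}$ of $\epi g$ and to transfer the CEL property from $\epi g$ to $S$, with the qualification $0\notin\partial_A g(\bar x)$ playing the role of a transversality condition controlling the vertical direction $\{0\}\times\R$ that is invisible to $S$.

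First I would unfold the CEL hypothesis on $\epi g$: there exist $\gamma>0$ and a norm-compact set $H\subset X\times\R$ such that
\[
\epi g\cap B((\bar x,0),\gamma)+tB(0,1)\subset \epi g+tH\qquad\text{for all }t\in[0,\gamma].
\]
For $x\in S\cap B(\bar x,\gamma)$ and $u\in B(0,1)$, applying this inclusion at $(x,0)\in\epi g$ to the unit vector $(u,0)$ produces $(y,r)\in\epi g$ and $(h,\beta)\in H$ with $x+tu=y+th$ and $r=-t\beta$, so that $g(y)\le -t\beta$. If $\beta\ge 0$ then $y\in S$ and $x+tu\in S+t\pi_X(H)$ right away. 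The delicate case is $\beta<0$, where only $g(y)\le t|\beta|\le tM$ is known (with $M:=\sup_{(h,\beta)\in H}\|(h,\beta)\|$), and $y$ may fall outside $S$.

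Here the hypothesis $0\notin\partial_A g(\bar x)$, equivalently $(0,-1)\notin N_A(\epi g,(\bar x,0))$, enters as a transversality between $\epi g$ and the vertical axis. Combined with the CEL of $\epi g$ (which delivers Ioffe normal compactness at $(\bar x,0)$), a fuzzy decrease principle for lower semicontinuous functions supplies $\bar v\in X$, $\alpha>0$ and a neighborhood $V$ of $\bar x$ such that every $y\in V$ with $0\le g(y)\le tM$ satisfies $g(y+t(M/\alpha)\bar v)\le 0$. Writing
\[
x+tu=\bigl(y+t(M/\alpha)\bar v\bigr)+t\bigl(h-(M/\alpha)\bar v\bigr),
\]
one lands in $S+tH'$ with $H':=\pi_X(H)-[0,M/\alpha]\bar v$, a norm-compact subset of $X$ which also absorbs the easy case $\beta\ge 0$ via $\mu=0$. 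After shrinking $\gamma$ if necessary, this yields the CEL property of $S$ at $\bar x$.

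The main obstacle is producing the quantitative decrease direction $\bar v$ with uniform slope $\alpha$ from the merely qualitative condition $0\notin\partial_A g(\bar x)$: since $g$ is only lower semicontinuous, Clarke's generalized directional derivative is unavailable and one must argue through the Ioffe calculus (fuzzy sum rule, mean value inequality) together with the normal compactness built into CEL sets. An alternative would bypass this construction entirely by appealing to a general preimage theorem: $S=\iota^{-1}(\epi g)$ for the linear map $\iota(x):=(x,0)$, whose adjoint kernel $\{0\}\times\R$ meets $N_A(\epi g,(\bar x,0))$ only at $0$ precisely when $0\notin\partial_A g(\bar x)$, and this is exactly the qualification ensuring that the preimage of a CEL set under a continuous linear map inherits the CEL property, in the spirit of \cite{18}.
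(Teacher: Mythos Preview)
This proposition is quoted in the paper as Corollary~4.1 of \cite{18} and is \emph{not} proved here; the paper simply records it as a known result. There is therefore no in-paper proof to compare your attempt against.

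On the substance of your sketch: the direct route has a genuine gap exactly where you place it. From $0\notin\partial_A g(\bar x)$ together with the CEL property of $\epi g$ you want a \emph{fixed} direction $\bar v\in X$ and a uniform slope $\alpha>0$ so that $g(y+t(M/\alpha)\bar v)\le 0$ whenever $0\le g(y)\le tM$ and $y$ is close to $\bar x$. For a merely lower semicontinuous $g$ this is not what the Ioffe calculus delivers: the qualification $0\notin\partial_A g(\bar x)$ yields at best a local error bound $d(y,S)\le C\,[g(y)]^{+}$, and the resulting correction $y\mapsto z\in S$ satisfies only $\|y-z\|\le CtM$, so the residual $(y-z)/t$ ranges over a \emph{ball} of radius $CM$, not over a norm-compact set. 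In an infinite-dimensional $X$ this kills the CEL conclusion. A single direction $\bar v$ is precisely what keeps the correction compact, but no mean-value or fuzzy-sum argument for l.s.c.\ functions produces one. As written, the first strategy does not close.

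Your alternative is the right mechanism and is how the result is actually obtained in \cite{18}. One writes $S=\iota^{-1}(\epi g)$ with $\iota(x)=(x,0)$, notes that $\ker\iota^*=\{0\}\times\R$, and checks that $N_A(\epi g,(\bar x,0))\cap(\{0\}\times\R)=\{0\}$ is exactly the condition $0\notin\partial_A g(\bar x)$ (any nonzero normal to an epigraph in $\{0\}\times\R$ has negative last coordinate, hence is a positive multiple of $(0,-1)$). The conclusion then follows from the general theorem in \cite{18} that preimages of CEL sets under continuous linear maps satisfying this normal-cone qualification are CEL. That theorem carries the real work; your final paragraph points at it but does not reprove it, so as a self-contained proof your proposal remains a sketch.
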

It is well known that if $\bar{x}$ is an interior point of $S$ then $T(S, \bar{x}) = X$. Unfortunately the  reverse implication is not true in general. To see this, take $X$ a separable Hilbert space with orthonormal basis $(b_n)_n$ and consider (see. \cite{4}) the set $S = clco\{ {{b_n}\over{2^n}}, -{{b_n}\over{2^n}}\}$ with $\bar{x}=0$. Then $\mathrm{int} S = \emptyset$ and $T(S, \bar{x}) = X$. It is shown in \cite{16} that the reverse implication holds for the class of compactly epi-Lipschitz  (CEL) sets.  The main argument used in \cite{16} is based on the following result established in \cite{17}.
\begin{proposition}[Corollary 2.7 in \cite{17}]\label{prop1}  Let $S\subset X$ be a closed CEL set at $\bar{x}$ in the boundary of $S$. Then there exists $x^*\in X^*$, with $x^*\neq 0$, such that $x^*\in N(S, \bar{x})$. 
\end{proposition}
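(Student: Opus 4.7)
My plan is to construct the required nonzero functional $x^*\in N(S,\bar x)$ as a weak-star cluster point of approximate subgradients of the distance function $d_S$ at points converging to $\bar x$. The boundary hypothesis furnishes a sequence and forces the norms of these subgradients to be bounded below; the CEL hypothesis, through the compactness of $H$, prevents the weak-star limit from collapsing to zero.

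\textbf{Approximate normals with norms bounded below.} Since $\bar x\in\bd S$ and $S$ is closed, pick $y_n\in X\setminus S$ with $y_n\to\bar x$, so that $d_S(y_n)>0=d_S(\bar x)$. A variational argument applied to the $1$-Lipschitz function $d_S$ (Ekeland's principle combined with a Zagrodny-type mean-value inequality for the Ioffe approximate subdifferential $\partial_A$) produces points $z_n\in S$ with $z_n\to\bar x$ and subgradients $\xi_n^*\in\partial_A d_S(z_n)\cap B_{X^*}$ satisfying $\|\xi_n^*\|\ge c>0$ for all $n$ large. Heuristically, the strict descent of $d_S$ from $y_n$ down to $\bar x$ must be witnessed by a non-vanishing one-sided derivative somewhere in between.

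\textbf{CEL forces the weak-star limit to be nonzero.} Using the inclusion $S\cap B(\bar x,\gamma)+tB(0,1)\subset S+tH$, for each $v\in B(0,1)$ and small $t>0$ there exists $h\in H$ with $z_n+t(v-h)\in S$. Because $\xi_n^*$ behaves as an approximate Fr\'echet normal to $S$ at $z_n\in S$ (since $z_n$ minimises $d_S$ on $S$), we obtain
\[
\langle\xi_n^*,v\rangle\ \le\ \sup_{h\in H}\langle\xi_n^*,h\rangle+\varepsilon_n\qquad\text{for every }v\in B(0,1),
\]
with $\varepsilon_n\to 0$. Taking the supremum in $v$ gives $\|\xi_n^*\|\le\sup_{h\in H}\langle\xi_n^*,h\rangle+\varepsilon_n$, so the lower bound $\|\xi_n^*\|\ge c$ yields $\sup_{h\in H}\langle\xi_n^*,h\rangle\ge c/2$ for $n$ large; pick $h_n\in H$ with $\langle\xi_n^*,h_n\rangle\ge c/2$. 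Alaoglu now produces a $w^*$-cluster point $\xi^*$ of $(\xi_n^*)\subset B_{X^*}$, while norm-compactness of $H$ yields a norm subsequential limit $h^*\in H$ of $(h_n)$; since weak-star convergence in $X^*$ couples with norm convergence in $X$, $\langle\xi^*,h^*\rangle\ge c/2>0$, hence $\xi^*\neq 0$. Finally, outer semicontinuity of $\partial_A$ with respect to basepoint and $w^*$-limits places $\xi^*\in\partial_A d_S(\bar x)$, and the inclusion $\R_+\partial_A d_S(\bar x)\subset N(S,\bar x)$ valid for closed subsets of a Banach space delivers $\xi^*\in N(S,\bar x)\setminus\{0\}$.

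The main technical obstacle is the first step: producing subgradients of $d_S$ with norms bounded uniformly away from zero at points accumulating at $\bar x$ without any Asplund-type hypothesis on $X$. This is precisely what the Ioffe approximate subdifferential $\partial_A$, built through finite-dimensional reductions and well-behaved under $w^*$-limits, is designed for, and it explains why $\partial_A$ (rather than the Fr\'echet or Clarke subdifferential) is singled out in the CEL statements cited just before this proposition.
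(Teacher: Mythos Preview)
The paper does not supply a proof of this proposition at all: it is quoted verbatim as Corollary~2.7 of \cite{17} and used as a black box in the proof of Lemma~\ref{lem3}. There is therefore nothing in the present paper to compare your argument against.

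That said, your strategy is the standard one behind results of this type (and is in the spirit of \cite{17,18}): manufacture approximate normals of norm bounded away from zero at nearby points, then use the compact set $H$ from the CEL definition to prevent a weak$^*$ cluster point from degenerating to $0$. Two points deserve tightening. First, the Zagrodny/Ekeland step does not by itself place the intermediate point $z_n$ in $S$; it lands somewhere near the segment $[x_n,y_n]$. Your Step~3, however, explicitly uses $z_n\in S\cap B(\bar x,\gamma)$ to invoke the CEL inclusion, and also calls $\xi_n^*$ ``an approximate Fr\'echet normal to $S$ at $z_n$'', which is a different object from an element of $\partial_A d_S(z_n)$. The usual fix is either to run the variational argument so that the output point is forced into $S$ (e.g.\ penalise by $d_S$ itself, or apply an extremal-principle formulation that yields $z_n\in S$ directly), or to pass from $\partial_A d_S(z_n)$ with $z_n\notin S$ to $\partial_A d_S$ at a nearby point of $S$ before using CEL. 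Second, the inequality $\langle\xi_n^*,v-h\rangle\le\varepsilon_n$ you need in Step~3 is a Fr\'echet-type estimate; elements of the Ioffe subdifferential $\partial_A$ are built from Dini derivatives along finite-dimensional subspaces, so you should say explicitly which representation of $\partial_A d_S$ you are using to justify that inequality. Once these two points are made precise, the remainder of the argument (Alaoglu plus norm-compactness of $H$, then upper semicontinuity of $\partial_A$ and the inclusion $\mathbb{R}_+\partial_A d_S(\bar x)\subset N(S,\bar x)$) goes through.
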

Let us prove the following theorem about epi-Lipschitz property of image and inverse image of epi-Lipschitz sets in Banach spaces
\begin{proposition}[Image and inverse image of epi-Lipschitz sets]\label{st} Let $X$ and $Y$ be Banach spaces, $S \subset Y$ be a closed set and $\varphi: X\rightarrow Y$ be a strictly Fr\'echet differentiable mapping at $\bar{x}$ with a surjective derivative $D\varphi(\bar{x})$. Then the set $C:=\varphi^{-1}(S)$ is epi-Lipschitzian at $\bar{x}\in \varphi^{-1}(S)$ IFF 
$S$ is epi-Lipschitz at $\varphi(\bar{x})$. 
\end{proposition}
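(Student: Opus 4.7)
The plan is to verify the defining epi-Lipschitz inclusion in both directions directly, combining the strict Fr\'echet expansion of $\varphi$ with Graves' metric regularity theorem, which applies since $D\varphi(\bar x)$ is a bounded surjective linear operator between Banach spaces. For the implication ``$S$ epi-Lipschitz $\Rightarrow$ $C$ epi-Lipschitz'', I would start from a direction $\bar v\neq 0$ and a constant $\gamma>0$ witnessing epi-Lipschitzness of $S$ at $\varphi(\bar x)$, and use surjectivity of $D\varphi(\bar x)$ to choose $\bar u\neq 0$ with $D\varphi(\bar x)\bar u=\bar v$. For $x\in C\cap B(\bar x,\gamma')$, $u\in B(\bar u,\gamma')$ and $t\in(0,\gamma')$, strict Fr\'echet differentiability yields
\[
\varphi(x+tu)=\varphi(x)+t\bigl(D\varphi(\bar x)u+e(t,x,u)\bigr),\qquad \|e\|\le\varepsilon\|u\|,
\]
with $\varepsilon$ as small as one wishes by shrinking $\gamma'$. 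Picking $\gamma'$ so that $\|D\varphi(\bar x)\|\gamma'+\varepsilon(\|\bar u\|+\gamma')<\gamma$, that $\varphi(x)\in S\cap B(\varphi(\bar x),\gamma)$ (by continuity of $\varphi$), and that $t<\gamma$, the epi-Lipschitz inclusion for $S$ places $\varphi(x+tu)$ in $S$, i.e., $x+tu\in C$.

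For the converse, I would assume $C$ is epi-Lipschitz at $\bar x$ in a direction $\bar u\neq 0$ with constant $\gamma$, and set $\bar v:=D\varphi(\bar x)\bar u$. If $\varphi(\bar x)\in\mathrm{int}\,S$ there is nothing to prove, so suppose $\varphi(\bar x)\in\bd S$. I first rule out $\bar v=0$: taking the base point $x=\bar x$ in the epi-Lipschitz inclusion for $C$ gives $\varphi(\bar x+t(\bar u+h))\in S$ for all $h\in B(0,\gamma)$, $t\in(0,\gamma)$. The auxiliary map $\Psi_t(h):=t^{-1}\bigl(\varphi(\bar x+t(\bar u+h))-\varphi(\bar x)\bigr)$ decomposes, by strict Fr\'echet differentiability, as $D\varphi(\bar x)(\bar u+h)+\rho(t,h)/t$, where $\rho(t,\cdot)/t$ is Lipschitz in $h$ with constant $\varepsilon(t)\to 0$. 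A standard Graves-type stability (the open-mapping bound survives small Lipschitz perturbations of the surjection $D\varphi(\bar x)$) then forces $\Psi_t(B(0,\gamma))\supset B(\Psi_t(0),c\gamma)$ for a fixed $c>0$ and small $t$, with $\Psi_t(0)=\bar v+\rho(t,0)/t$. Should $\bar v=0$, this would give $S\supset B(\varphi(\bar x),tc\gamma/2)$ for small $t>0$, contradicting $\varphi(\bar x)\in\bd S$.

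With $\bar v\neq 0$ secured, I verify the epi-Lipschitz inclusion for $S$ in direction $\bar v$: for $y\in S\cap B(\varphi(\bar x),\gamma')$, $v\in B(\bar v,\gamma')$, $t\in(0,\gamma')$, Graves' metric regularity of $\varphi$ at $\bar x$ produces $x\in C$ with $\varphi(x)=y$ and $\|x-\bar x\|\le\tau\gamma'$; the open-mapping theorem for $D\varphi(\bar x)$ produces $u\in X$ with $D\varphi(\bar x)u=v$ and $\|u-\bar u\|\le M\gamma'$. Strict Fr\'echet then gives $\|\varphi(x+tu)-(y+tv)\|\le\varepsilon t\|u\|$, and a second application of Graves' metric regularity produces $\tilde u$ with $\varphi(x+t\tilde u)=y+tv$ and $\|\tilde u-u\|\le 2\tau\varepsilon\|u\|$. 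For $\gamma',\varepsilon$ small enough that $x\in B(\bar x,\gamma)$, $\tilde u\in B(\bar u,\gamma)$ and $t<\gamma$, the epi-Lipschitz property of $C$ places $x+t\tilde u$ in $C$, hence $y+tv=\varphi(x+t\tilde u)\in S$. The main obstacle lies in the necessity direction: the Graves-stability argument ruling out $D\varphi(\bar x)\bar u=0$, the double invocation of Graves' metric regularity (first to lift $y$ into $C$, then to correct the approximate preimage of $y+tv$), and the careful bookkeeping of $\gamma,\gamma',\varepsilon,\tau,M$ needed to keep $\tilde u$ inside $B(\bar u,\gamma)$ despite the Graves correction.
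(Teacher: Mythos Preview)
Your proposal is correct and follows essentially the same route as the paper: both directions are verified directly from the epi-Lipschitz inclusion by combining the strict Fr\'echet expansion of $\varphi$ with Graves' theorem (the paper uses the openness form $B(\varphi(x),t)\subset\varphi(B(x,\alpha t))$ from \cite{11}, you use the equivalent metric-regularity form), and in the direction $C\Rightarrow S$ both arguments invoke Graves twice---once to lift $y\in S$ to a preimage $x\in C$, once to correct an approximate preimage of $y+tv$ to an exact one. The only substantive difference is that you explicitly rule out $D\varphi(\bar x)\bar u=0$ via a Graves-stability argument, whereas the paper simply takes $D\varphi(\bar x)\bar v$ as the direction for $S$ without checking it is nonzero; your treatment is cleaner on this point, though the paper's conclusion survives anyway since the established inclusion $S\cap B(\varphi(\bar x),\delta_1)+tB(0,\delta)\subset S$ already forces $\varphi(\bar x)\in\mathrm{int}\,S$ in that degenerate case.
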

\begin{proof} Suppose that $C$ is epi-Lipschitzian at $\bar{x}$. Since $\varphi$ is  strictly Fr\'echet differentiable  at $\bar{x}$ with a surjective derivative $D\varphi(\bar{x})$, Theorem 1.3 in \cite{11} ensures the existence of $\alpha >0$ and $r>0$ such that 
\begin{eqnarray}\label{Open}
B(\varphi(x), t) \subset \varphi(B(x, \alpha t)) \, \forall\, x\in B(\bar{x}, r), \, \forall\, t\in [0, r].
\end{eqnarray}
On the other hand since the set $C:=\varphi^{-1}(S)$ is epi-Lipschitzian at $\bar{x}$  there exist $\bar{v} \in X$, $\bar{v}\neq 0$,  and $s >0$ such that 
\begin{eqnarray}\label{lip}
C\cap B(\bar{x}, s) + tB(\bar{v}, s) \subset C \, \forall\, t\in [0, s]
\end{eqnarray} 
Moreover $\varphi$ strictly Fr\'echet differentiable  at $\bar{x}$ implies that for all $\varepsilon >0$ there exists $\delta >0$ such that 
\begin{eqnarray}\label{strictdiff}
\Vert \varphi(x) - \varphi(y) - D\varphi(\bar{x})(x-y)\Vert \leq \varepsilon\Vert x-y\Vert \, \forall\, x, y\in B(\bar{x},2 \delta).
\end{eqnarray}
We may assume that $\alpha \geq 1$,  $r=s$ and  $\alpha(\varepsilon \Vert \bar{v}\Vert + 2\delta) \leq {r\over 2}$. Pick $ \delta_1>0$, with $\alpha\delta_1\leq {\delta\over 2}$, and let $y\in B(\varphi(\bar{x}), \delta_1)\cap S$ and $b\in B(0, \delta) \subset Y$. Relation (\ref{Open}) ensures the existence of $x\in B(\bar{x}, \alpha \delta_1)$ such that $y = \varphi(x)$, so $x\in C\cap B(\bar{x}, \alpha \delta_1)$.\\ Now using (\ref{strictdiff}), we obtain 
$$\forall\,  t\in ]0, {\delta\over{\Vert \bar{v}\Vert}}], \quad \Vert \varphi(x+t\bar{v}) - \varphi(x) - tD\varphi(\bar{x})\bar{v}\Vert \leq \varepsilon t\Vert \bar{v}\Vert$$
and hence $$\varphi(x) +t(D\varphi(\bar{x})\bar{v} + b)\in B(\varphi(x+t\bar{v}), t(\varepsilon \Vert \bar{v}\Vert)+\delta).$$
By (\ref{Open}), there exists $b'\in B(0, \alpha(\varepsilon \Vert \bar{v}\Vert)+\delta))\subset X$ such that $$\varphi(x) +t(D\varphi(\bar{x})\bar{v} + b) = \varphi(x+t(\bar{v} + b')).$$ Invoking relation (\ref{lip}), we get $x+t(\bar{v} + b')\in  C$, equivalently, $\varphi(x+t(\bar{v} + b')) \in S$ and hence 
$$\varphi(x) +t(D\varphi(\bar{x})\bar{v} + b) \in S.$$
In summary, we have 
$$\forall\,  t\in ]0, {\delta\over{\Vert \bar{v}\Vert}}], \quad  B(\varphi(\bar{x}), \delta_1)\cap S + tB(D\varphi(\bar{x})\bar{v} , \delta) \subset S.$$
By taking $\gamma = \min\{{\delta\over{\Vert \bar{v}\Vert}}, \delta_1\}$ we deduce that $S$ is epi-Lipschitzian at $\varphi(\bar{x})$.
\\
Conversely, suppose that $S$ is epi-Lipschitzian at $\varphi(\bar{x})$. As $\varphi$ is strictly Fr\'echet differentiable at $\bar{x}$ with  surjective derivative $D\varphi(\bar{x})$, there exists $\alpha >0$ such that 
for all $\varepsilon > 0$ there exists $\delta > 0$ satisfying 
\begin{eqnarray}\label{Cont}
\varphi(B(\bar{x}, \delta)) \subset B(\varphi(\bar{x}), \varepsilon),
\end{eqnarray}
\begin{eqnarray}\label{strict}
\Vert \varphi(x) - \varphi(x') - D\varphi(\bar{x})(x-x')\Vert \leq \varepsilon \Vert x-x'\Vert \, \forall\, x, x'\in B(\bar{x}, \delta),
\end{eqnarray}
and 
\begin{eqnarray}\label{surject}
B(0, 1) \subset D\varphi(\bar{x})(B(0, \alpha)).
\end{eqnarray}
By assumption,  there exists $\gamma > 0$ and $w \in Y$ such that 
\begin{eqnarray}\label{epiL}
\forall\, t\in ]0, \gamma], \quad  B(\varphi(\bar{x}), \gamma)\cap S + tB(w , \gamma) \subset S.
\end{eqnarray}
Using the surjectivity of $D\varphi(\bar{x})$ there exists $v\in X$ such that $w = D\varphi(\bar{x})(v)$.  So, let $x\in B(\bar{x},\delta)\cap C$, $t\in ]0, \delta[$ and $u\in B(v, \delta)$. Relations (\ref{strict}), (\ref{Cont}) and (\ref{surject})  ensure the following inclusions
\begin{align*}
\varphi (x+tu) &\in \varphi(x) + tD\varphi(\bar{x})(u) + B(0, \varepsilon t\Vert u\Vert )\quad (\hbox{by }  (\ref{strict}))\\
 &  \subset B(\varphi(\bar{x}), \varepsilon)\cap S +  tD\varphi(\bar{x})(u) + B(0, \varepsilon t\Vert u\Vert )\quad (\hbox{by }  (\ref{Cont})) \\
 & \subset B(\varphi(\bar{x}), \varepsilon)\cap S + t D\varphi(\bar{x})(B(u, \alpha\varepsilon \Vert u\Vert)) \quad (\hbox{by }  (\ref{surject}))\\
 & \subset B(\varphi(\bar{x}), \varepsilon)\cap S + t (B(D\varphi(\bar{x})(u),  \alpha\varepsilon \Vert u\Vert\cdot \Vert D\varphi(\bar{x})\Vert)\\
 &  \subset S \quad (\hbox{because $\varepsilon$ is arbitrary and  (\ref{epiL})}).
\end{align*}
This shows that $C$ is epi-Lipschitzian at $\bar{x}$. The proof is completed. 
\end{proof} 
\begin{remark} This proposition tells us that we may replace the function $\varphi$ in \eqref{epiphi} by any strictly Fr\'echet differentiable mapping  $\varphi: E\times \mathbb{R}\rightarrow X$ at $\bar{y}:=(\bar{u}, f(\bar{u})) \in \varphi^{-1}(\bar{x})$ with a surjective derivative $D\varphi(\bar{y})$. Where $S$ is as in the first part of \eqref{epiphi}. 
\end{remark}
\begin{remark} Note that this proposition is not true even if $\varphi$ is a linear non surjective operator. Indeed, let (\cite{5}) $X = Y = l^2$ be the Hilbert space of square summable
sequences, with $(e_k)$ its canonical orthonormal base and let the operator $\varphi : l^2 \to l^2$ be defined by
$$\varphi(\sum x_i e_i) = \sum 2^{1-i}x_ie_i.$$
It is easily seen that for all $i$, $e_i\in \hbox{Im}\varphi$ and for all $k\geq 1$, $\displaystyle \sum_{i=1}^k  2^{1-i}e_i \in \hbox{Im}\varphi$ but $\displaystyle \sum 2^{1-i}e_i \notin \hbox{Im}\varphi$ (this shows that  Im$(\varphi)$ is not closed).
Then $\varphi$ is not surjective. Further Im$(\varphi)$ is a proper dense subspace of $l^2$. So that the space $X$ is epi-Lipschitzian at $0$, but not $\varphi(X)$. 
\end{remark}
\section{Strict Hadamard  differentiablity of epi-Lipschitz sets}
This section is devoted to the local representation of epi-Lipschitz set as the epigraph of strictly Hadamard  differentiable function. As a motivation of the following result we start with the following observation. 
Let  $S = \hbox{epi}\,f$ be the epigraph of the function $f$  defined by
$$
f(x)= \left\{\begin{array}{ll}
x^2\sin{1\over x} & \hbox{  if  }  x\neq 0\\
0& \hbox{  if  }  x=0
\end{array}
\right.
$$
and let $\bar{x}=(0,0)$. Then  $f$ is Lipschitz near $0$ and $T(S, \bar{x})  = \hbox{epi}\vert \cdot\vert.$ This shows that $T(S, \bar{x})$ does not contain any hyperplane.\\

\noindent In the sequel,  we will establish that if $f$ is Lipschitz near $\bar{x}$ then $T(\epi\,f,(\bar{x},f(\bar{x})))$ contains a closed  hyperplane if and only if $f$ is  strictly Hadamard  differentiable at $\bar{x}$. 

\begin{definition} We say that an epi-Lipschitz set $S$ at $\bar{x}\in S$ is strictly Hadamard  differentiable at $\bar{x}$ if its local representation $f$ is strictly Hadamard differentiable at $\bar{x}$. 
\end{definition}
\begin{theorem}\label{th0}
Let $X$ be a  real  Banach space and $S \subset X$ be a closed set containing $\bar{x}$. Suppose that $S$ is epi-Lipschitz at $\bar{x}$. Then $S$ is strictly Hadamard differentiable at $\bar{x}$ if and only if $T(S,\bar{x})$ contains a closed hyperplane.
\end{theorem}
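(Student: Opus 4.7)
Since $S$ is epi-Lipschitz at $\bar x$, the local representation recorded in \eqref{epiphi} supplies a closed hyperplane $E$ of $X$ with $X = E \oplus \mathbb{R}\bar v$, a neighborhood $\Omega$ of $\bar x$, and a function $f : E \to \mathbb{R}$, locally Lipschitz near $\bar u := \pi_E(\bar x)$, such that $\Omega \cap S = \Omega \cap \varphi(\epi f)$ with $\varphi(u,r) := u + r\bar v$ a linear topological isomorphism from $E\times\mathbb{R}$ onto $X$. By the definition above, $S$ is strictly Hadamard differentiable at $\bar x$ precisely when $f$ is strictly Hadamard differentiable at $\bar u$; and by the Clarke characterization recalled in Section~2 for locally Lipschitz functions, this amounts to $\partial f(\bar u)$ being a singleton.

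Because $\varphi$ is a linear homeomorphism sending $(\bar u, f(\bar u))$ to $\bar x$, and since tangent cones depend only on the local geometry of the set, one has
\begin{equation*}
T(S, \bar x) = \varphi\bigl(T(\epi f, (\bar u, f(\bar u)))\bigr),
\end{equation*}
so $T(S,\bar x)$ contains a closed hyperplane of $X$ if and only if $T(\epi f,(\bar u, f(\bar u)))$ contains a closed hyperplane of $E\times\mathbb{R}$. For $f$ locally Lipschitz, the classical Rockafellar identity gives
\begin{equation*}
T(\epi f, (\bar u, f(\bar u))) = \epi q, \qquad q(\cdot) := f^{\circ}(\bar u;\cdot),
\end{equation*}
where $q$ is a continuous sublinear function on $E$ whose support set is exactly $\partial f(\bar u)$. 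Consequently, $\partial f(\bar u)$ is a singleton $\{x^*\}$ if and only if $q = \langle x^*, \cdot\rangle$ is linear.

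The crux of the argument is thus the following lemma, which I would isolate: a continuous sublinear map $q : E \to \mathbb{R}$ is linear iff $\epi q$ contains a closed hyperplane of $E\times\mathbb{R}$. The $(\Leftarrow)$ direction is immediate, as the graph of a continuous linear form is a closed hyperplane lying inside its epigraph. For $(\Rightarrow)$, any closed hyperplane $H\subset E\times\mathbb{R}$ is the level set of an affine functional $(u,r)\mapsto \langle \ell, u\rangle + \alpha r - \beta$ with $(\ell,\alpha)\neq 0$. The case $\alpha=0$ is ruled out because $\epi q$ would have to contain the "vertical slab" $\{\langle \ell,\cdot\rangle = \beta\}\times\mathbb{R}$, impossible for real-valued $q$. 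If $\alpha\neq 0$, normalize so that $H = \{(u, \langle \ell, u\rangle - \beta): u\in E\}\subset \epi q$, i.e.\ $q(u)\leq \langle \ell, u\rangle - \beta$ for every $u$; applying this inequality to $\lambda u$ with $\lambda\to +\infty$ and using positive homogeneity forces $q(u)\leq \langle \ell,u\rangle$, while applying it to $-u$ together with the sublinearity inequality $0=q(0)\leq q(u)+q(-u)$ yields $q(u)\geq \langle \ell, u\rangle$. Hence $q=\langle \ell,\cdot\rangle$.

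Chaining the three equivalences yields the theorem. The main obstacle is the hyperplane lemma of the third paragraph; the rest is bookkeeping, but one must be slightly careful to invoke the Rockafellar epigraphical formula for $T(\epi f, \cdot)$ in the Banach (not merely finite-dimensional) setting, which is standard since $f$ is locally Lipschitz and hence Clarke-regular-free verbal issues do not arise.
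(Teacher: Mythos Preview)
Your argument is correct and reaches the goal by a genuinely different route than the paper. The paper works on the \emph{dual} side: it uses the transfer formula $N(\epi f,(\bar u,f(\bar u)))=[D\varphi(\bar y)]^*N(S,\bar x)$, notes that a closed hyperplane inside $T(S,\bar x)$ forces $N(S,\bar x)\subset\mathbb{R}_+a^*$ for some $a^*\neq 0$, and then reads off that $\partial f(\bar u)\times\{-1\}$ lies on a half-line, hence $\partial f(\bar u)$ is a singleton; the converse reverses these steps. You instead stay on the \emph{primal} side: using the linear isomorphism $\varphi$ of \eqref{epiphi} together with Rockafellar's identity $T(\epi f,(\bar u,f(\bar u)))=\epi f^{\circ}(\bar u;\cdot)$, you reduce everything to the clean lemma ``$\epi q$ contains a closed hyperplane iff the sublinear $q$ is linear,'' which you prove directly. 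Your approach is arguably more transparent and even covers affine (not necessarily linear) hyperplanes via the parameter $\beta$; the paper's polar-cone argument implicitly treats $H$ as a linear subspace. Two cosmetic points: your $(\Leftarrow)$/$(\Rightarrow)$ labels in the lemma are swapped relative to the stated biconditional, and it would be worth citing explicitly that $T(\epi f,\cdot)=\epi f^{\circ}$ holds in arbitrary Banach spaces for locally Lipschitz $f$ (e.g.\ Clarke, \emph{Optimization and Nonsmooth Analysis}, Thm.~2.4.9), since this is the one nontrivial ingredient you import.
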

\begin{proof} Since $S$ is epi-Lipschitz at $\bar{x}$, there exist 
\begin{itemize}
\item[$\bullet$] a closed hyperplane $E$ and $\bar{v}\in X\setminus\{0\}$ such that $X=E\oplus\R\bar{v}$,
\item[$\bullet$] a Lipschitz continuous mapping $f:E\rightarrow\R$ near $\bar{u}=\pi_E\bar{x}$ and
\item[$\bullet$]  a strictly Fr\'echet differentiable mapping $\varphi: E\times\R\rightarrow X$   at $\bar{y}=(\bar{u},f(\bar{u}))$ with a surjective derivative $D\varphi(\bar{y})$ 
\end{itemize}
such that 
\begin{equation}\label{st1}
S\cap\Omega=\Omega\cap\varphi(\mathrm{epi}\,f)
\end{equation}
\noindent Since $f$ is Lipschitz continuous around $\bar{u}$, it is Clarke subdifferentiable at $\bar{u}$. Moreover,  by \eqref{st1}  and the fact that $ D\varphi(\bar{u},f(\bar{u}))$ is surjective,  we have (see for example Theorem 1.17 in \cite{21})
\begin{eqnarray}\label{equaN}
N(\mathrm{epi}\,f,(\bar{u},f(\bar{u}))) = [D\varphi(\bar{y})]^*N(S,\bar{x})
\end{eqnarray}
where $[D\varphi(\bar{y})]^*$ denotes the adjoint linear mapping of $D\varphi(\bar{y})$.\\
\noindent This implies that 
\begin{equation}\label{st2}
\partial f(\bar{u})\times\{-1\}\subset [D\varphi(\bar{y})]^*N(S,\bar{x}).
\end{equation}
Now suppose that $T(S,\bar{x})$ contains a closed hyperplane. Let us show that $f$ is strictly Hadamard  differentiable at $\bar{u}$.\\
Taking the polar of $T(S,\bar{x})$ and using \eqref{st2} we have 
$$\partial f(\bar{u})\times\{-1\}\subset \R_+[D\varphi(\bar{y})]^*(a^*)$$ for some $a^*\neq0$.  
Let $[D\varphi(\bar{y})]^*(a^*)=(a_1^*,a_2^*)$ with $a_1^*\in E^*$ and $a_2^*\in\R$, we have
$$\partial f(\bar{u})\times\{-1\}\subset\R_+(a_1^*,a_2^*).$$
Now for $u^*\in\partial f(\bar{u})$, there exists $\lambda>0$ such that $(u^*,-1)=\lambda(a_1^*,a_2^*)$.\\
This implies that $a_2^*=-{1\over\lambda}$. Moreover $u^*$ is constant with $u^*=-{a_1^*\over a_2^*}$. That is 
$$\partial f(\bar{u})=\left\{-{a_1^*\over a_2^*}\right\}.$$
Since $f$ is Lipschitz continuous in an open neighbourhood of $\bar{u}$ and  $\partial f(\bar{u})$ is a singleton then $f$ is strictly Hadamard  differentiable at $\bar{u}$.\\
\noindent Conversely suppose that $f$ is strictly Hadamard differentiable at $\bar{u}$. Let us show that $T(S,\bar{x})$ contains a closed hyperplane.\\
Relation (\ref{equaN}) ensures that  $ N(S,\bar{x}) = \R_+{[D\varphi(\bar{y})]^*}^{-1}(\nabla f(\bar{u}),-1)$.  Since $D\varphi(\bar{y})$ is surjective, the set ${[D\varphi(\bar{y})]^*}^{-1}(\nabla f(\bar{u}),-1)$ is a singleton. Put $a^* = {[D\varphi(\bar{y})]^*}^{-1}(\nabla f(\bar{u}),-1)$. Then $a^*\neq 0$ and 
$$T(S,\bar{x})=\{h\in X:\langle a^*,h\rangle\leq0\}$$
which clearly contains a hyperplane.
\end{proof}

In what follows,  we shall prove that the strict Hadamard  differentiability of $S$ at $\bar{x}$  is equivalent to saying that $T(\bd S,\bar{x}) $ is a closed hyperplane. We begin with the next lemma.
\begin{lemma}\label{lem3}
Let $S$ be a closed subset of a real Banach (resp. real normed vector) space $X$. Assume that $S$ is CEL (resp. epi-Lipschitzian) at $\bar{x}\in \bd S$. If $T(S,\bar{x})$ contains a closed hyperplane $H$, then $T(S,\bar{x})\cap (-T(S,\bar{x})) = H$ (resp. $T(\bd S, \bar{x}) = H$). 
\end{lemma}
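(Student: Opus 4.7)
The plan is to exploit the rigidity that the closed convex cone $T(S,\bar x)$ acquires once it contains a closed subspace of codimension one.

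First I would establish a purely linear structural fact: any closed convex cone $C\subset X$ containing the closed hyperplane $H$ is either $H$ itself, one of the two closed half-spaces bounded by $H$, or all of $X$. To see this, pick any $x_0\notin H$, so that $X=H\oplus\R x_0$, and set $T:=\{t\in\R:\,tx_0\in C\}$. Since $H\subset C$ and $C+C\subset C$ (cone-convexity), a direct check gives $C=H+\{tx_0:\,t\in T\}$; meanwhile $T$ is a closed convex cone of $\R$ through $0$, so $T\in\{\{0\},[0,\infty),(-\infty,0],\R\}$, producing exactly the four alternatives above. In the first three of them it is immediate that $C\cap(-C)=H$; only the case $C=X$ violates this.

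Next I would rule out $T(S,\bar x)=X$ using the hypothesis $\bar x\in\bd S$. In the CEL/Banach case this is Proposition~\ref{prop1}, which supplies an $x^*\neq 0$ in $N(S,\bar x)$, whence $T(S,\bar x)\subset\{h:\langle x^*,h\rangle\le 0\}\neq X$. In the epi-Lipschitz case over a normed space, Theorem~\ref{tcz1} yields $0\notin\partial\Delta_S(\bar x)$, and then Corollary~\ref{cor1} combined with $N(S,\bar x)=\R_+\partial\Delta_S(\bar x)$ produces a nonzero element of $N(S,\bar x)$, so again $T(S,\bar x)\neq X$.

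Putting the two steps together, $T(S,\bar x)$ must be either $H$ or a closed half-space bounded by $H$, and in either case $T(S,\bar x)\cap(-T(S,\bar x))=H$, which settles the CEL statement. For the epi-Lipschitz statement I would invoke Lemma~\ref{epi}, which identifies $T(\bd S,\bar x)$ with $T(S,\bar x)\cap(-T(S,\bar x))$, giving $T(\bd S,\bar x)=H$. The heart of the argument is the linear structural lemma --- proving the decomposition $C=H+\{tx_0:\,t\in T\}$ cleanly and enumerating the closed convex cones of $\R$ through the origin --- while the use of Proposition~\ref{prop1} or Corollary~\ref{cor1} to exclude $T(S,\bar x)=X$ is short, yet is exactly where the boundary hypothesis $\bar x\in\bd S$ is consumed.
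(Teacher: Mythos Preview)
Your proposal is correct and reaches the conclusion by a somewhat different route than the paper. The paper argues by contradiction on the dual side: assuming some $h\in T(S,\bar x)\cap(-T(S,\bar x))\setminus H$, it separates $h$ from $H$ by a nonzero functional $x^*$, notes that $N(S,\bar x)\subset H^\perp=\R x^*$, and then invokes Proposition~\ref{prop1} (respectively Theorem~\ref{tcz1} with Corollary~\ref{cor1}) to force $N(S,\bar x)\neq\{0\}$, hence $\pm x^*\in N(S,\bar x)$; the bilateral tangency of $h$ then gives $\langle x^*,h\rangle=0$, a contradiction. Your argument is primal rather than dual: you classify outright all closed convex cones containing $H$ by reducing, via $X=H\oplus\R x_0$, to the closed convex cones of $\R$, and then exclude the case $C=X$ with the very same external results. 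Your route is slightly more elementary (no separation theorem is needed, though the one the paper uses is trivial since $H$ is already a hyperplane) and yields the small extra dividend of identifying $T(S,\bar x)$ itself---it must be $H$ or one of the two closed half-spaces bounded by $H$---whereas the paper only pins down its lineality space. Both arguments consume the hypothesis $\bar x\in\bd S$ at exactly the same place and finish the epi-Lipschitz clause identically via Lemma~\ref{epi}.
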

\begin{proof}
Let $X$ be a real Banach space. Suppose that $S$ is CEL at $\bar{x}$ and that $T(S,\bar{x})$ contains a closed hyperplane $H$. Assume that there exists $h\in T(S,\bar{x})\cap (-T(S,\bar{x})) $ such that $h\notin H$. By the Banach strict separation there exist $x^*\neq0$ and $\alpha\neq0$ such that 
$$\langle x^*,h\rangle>\alpha>\langle x^*,v\rangle,\;\forall\;v\in H.$$
This implies that $\langle x^*,v\rangle=0$ for all $v\in H$ and $\alpha>0$. Consequently we have
$x^*\in H^\perp$. Since $H\subset T(S,\bar{x})$ we have $N(S,\bar{x})\subset H^\perp$. This implies that $$N(S,\bar{x})\subset \R x^*.$$
Moreover since $\bar{x}\in\bd S$ and $S$ is CEL at $\bar{x}$, Proposition \ref{prop1} ensures that  $N(S,\bar{x})\neq\{0\}$. Therefore 
$$x^*\in N(S,\bar{x})\quad\mbox{or}\quad-x^*\in N(S,\bar{x}).$$
Because $h\in T(S,\bar{x})\cap (-T(S,\bar{x})) $, any one of the two last conditions imply that $\langle x^*,h\rangle=0$ which contradicts the fact that $\alpha>0$. 
Therefore $$T(S,\bar{x})\cap (-T(S,\bar{x})) =H.$$
Now let $X$ be a real normed space. Suppose that $S$ is epi-Lipschitzian and that $T(S,\bar{x})$ contains a closed hyperplane $H$. 
Assume that there exists $h\in T(S,\bar{x})\cap (-T(S,\bar{x})) $ such that $h\notin H$. 
Following the above arguments with the Banach strict separation theorem there exists some $x^*\neq0$ such that $$N(S,\bar{x})\subset \R x^*.$$ Moreover Theorem \ref{tcz1} and Corollary \ref{cor1} imply that $N(S,\bar{x})\neq\{0\}.$\\
Similar arguments as above give that $T(S,\bar{x})\cap (-T(S,\bar{x}))=H$. Therefore Lemma \ref{epi} imply that $T(\bd S, \bar{x}) = H$.
\end{proof}
So, we have the following result which follows immediately from the above lemma and Theorem \ref{th0}.
\begin{theorem}
Let $S$ be a closed subset of the real Banach space $X$ which  is epi-Lipschitzian at $\bar{x}\in \bd S$. Then $S$ is strictly Hadamard differentiable at $\bar{x}$ if and only if $T(\bd S,\bar{x}) $ is a closed hyperplane. 
\end{theorem}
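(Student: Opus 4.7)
The plan is to combine Theorem \ref{th0} with Lemma \ref{lem3} and Lemma \ref{epi}; at this point essentially all the work has been done and what remains is a short bookkeeping argument.

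First I would handle the forward implication. Assume $S$ is strictly Hadamard differentiable at $\bar x$. Theorem \ref{th0} then supplies a closed hyperplane $H$ contained in the Clarke tangent cone $T(S,\bar x)$. Since $S$ is epi-Lipschitzian at $\bar x\in\bd S$, the second (normed-space) conclusion of Lemma \ref{lem3} applies and yields $T(S,\bar x)\cap(-T(S,\bar x))=H$; invoking Lemma \ref{epi} we obtain
\[
T(\bd S,\bar x)=T(S,\bar x)\cap(-T(S,\bar x))=H,
\]
so $T(\bd S,\bar x)$ is a closed hyperplane, as required.

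For the converse, suppose $T(\bd S,\bar x)$ is a closed hyperplane $H$. Lemma \ref{epi} gives $T(\bd S,\bar x)=T(S,\bar x)\cap(-T(S,\bar x))$, so in particular $H\subset T(S,\bar x)$. Thus $T(S,\bar x)$ contains a closed hyperplane, and Theorem \ref{th0} implies that $S$ is strictly Hadamard differentiable at $\bar x$.

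There is no real obstacle here because Lemma \ref{epi} and Lemma \ref{lem3} have already absorbed all the geometric content (CEL/epi-Lipschitz nontriviality of the normal cone together with Banach strict separation); the only thing one must be careful about is using the correct branch of Lemma \ref{lem3}, namely the normed-space statement $T(\bd S,\bar x)=H$ rather than the weaker CEL conclusion $T(S,\bar x)\cap(-T(S,\bar x))=H$. Both are in fact available since $S$ is epi-Lipschitzian, and as shown above the two together with Lemma \ref{epi} deliver the theorem in one line.
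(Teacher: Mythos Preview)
Your proof is correct and matches the paper's approach: the paper simply states that the theorem ``follows immediately from the above lemma [Lemma~\ref{lem3}] and Theorem~\ref{th0},'' and your argument is exactly the unpacking of that claim. You are right to make explicit the use of Lemma~\ref{epi} in the converse direction (to pass from $T(\bd S,\bar x)=H$ to $H\subset T(S,\bar x)$), a step the paper leaves implicit.
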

We complete this section by the two following corollaries of Theorem \ref{th0}.
\begin{corollary}
Let $S$ be a closed subset in a real Banach space $X$. Assume that $S$ is epi-Lipschitzian at $\bar{x}\in \bd S$. Then $S$ is strictly  Hadamard differentiable at $\bar{x}$ if and only if $\partial \Delta_S(\bar{x})$ is contained in a segment $]0,a^*]$ for some $a^*\in X^*$.
\end{corollary}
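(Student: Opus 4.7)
The plan is to reduce the corollary directly to Theorem~\ref{th0} by means of the identity $N(S,\bar x)=\R_+\partial\Delta_S(\bar x)$ supplied by Corollary~\ref{cor1}. The guiding idea is this: Theorem~\ref{th0} recasts strict Hadamard differentiability of $S$ at $\bar x$ as the presence of a closed hyperplane in $T(S,\bar x)$, polarity rephrases this as $N(S,\bar x)$ lying in a one-dimensional subspace of $X^*$, and Corollary~\ref{cor1} then transports the condition over to $\partial\Delta_S(\bar x)$.

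For the sufficient direction I would start from $\partial\Delta_S(\bar x)\subset\,]0,a^*]$, observe that this forces $a^*\neq 0$, and deduce $N(S,\bar x)\subset\R_+ a^*$ from Corollary~\ref{cor1}. Taking polars yields $T(S,\bar x)\supset\{h\in X:\langle a^*,h\rangle\le 0\}$, which contains the closed hyperplane $\ker a^*$; Theorem~\ref{th0} then delivers strict Hadamard differentiability of $S$ at $\bar x$.

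Conversely, Theorem~\ref{th0} supplies a closed hyperplane $H\subset T(S,\bar x)$, so $N(S,\bar x)\subset H^{\perp}$, a one-dimensional subspace of $X^*$. Since $S$ is epi-Lipschitzian at $\bar x\in\bd S$, Proposition~\ref{prop1} forbids $N(S,\bar x)=\{0\}$; thus $N(S,\bar x)=\R_+ b^*$ for some $b^*\neq 0$. Because $\Delta_S$ is $1$-Lipschitz, $\partial\Delta_S(\bar x)$ is a nonempty, convex, weak-$*$ compact subset of the ray $\R_+ b^*$, and Theorem~\ref{tcz1} rules out the origin; hence $\partial\Delta_S(\bar x)=[t_1,t_2]\,b^*$ with $0<t_1\le t_2$. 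Setting $a^*:=t_2 b^*$ then gives $\partial\Delta_S(\bar x)=[t_1/t_2,1]\,a^*\subset\,]0,a^*]$.

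The only real obstacle is the rescaling step at the end: passing from ``$\partial\Delta_S(\bar x)$ lies on a ray through the origin'' to ``it lies in a half-open segment $]0,a^*]$'' requires selecting the correct endpoint. I would normalize by the maximal coefficient $t_2$ on the ray; this normalization is legitimate precisely because Theorem~\ref{tcz1} guarantees $t_1>0$, so that the origin is genuinely avoided. Everything else is a routine polar-calculus argument.
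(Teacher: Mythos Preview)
Your proposal is correct and follows essentially the same route as the paper: both directions hinge on Theorem~\ref{th0}, Corollary~\ref{cor1}, and Theorem~\ref{tcz1}, combined with the polar duality between $T(S,\bar x)$ and $N(S,\bar x)$. The only cosmetic difference is that in the necessary direction you invoke Proposition~\ref{prop1} to get $N(S,\bar x)\neq\{0\}$, whereas the paper obtains this (and the fact that $N(S,\bar x)$ is a ray rather than a full line) directly from $N(S,\bar x)=\R_+\partial\Delta_S(\bar x)$ together with $0\notin\partial\Delta_S(\bar x)$; your extra step is harmless but unnecessary.
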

\begin{proof}
Suppose that $S$ is strictly Hadamard differentiable at $\bar{x}$. Then by Theorem \ref{th0}, there exists a closed hyperplane $H$ such that $H\subset T(S,\bar{x})$. This implies that
$$N(S,\bar{x})\subset \R x^*\quad\mbox{for some}\quad x^*\in X^*\setminus\{0\}.$$
This and Corollary \ref{cor1} imply that
$$\R_+\partial\Delta_S(\bar{x})\subset \R x^*.$$
Moreover by Theorem \ref{tcz1},  we have $0\notin\partial\Delta_S(\bar{x})$.  This and the fact $\partial\Delta_S(\bar{x})$ is bounded implies that $$\partial \Delta_S(\bar{x})\subset ]0,a^*]\quad\mbox{for some}\;a^*\neq0.$$
Conversely assume that $\partial \Delta_S(\bar{x})\subset ]0,a^*]$ for some $a^*\neq0$. Then we have 
$$\R_+\partial \Delta_S(\bar{x})\subset \R a^*.$$ This implies that 
$$\langle a^*\rangle^\perp\subset T(S,\bar{x}).$$
That is $T(S,\bar{x})$ contains a closed hyperplane. Therefore by Theorem \ref{th0}, $S$ is strictly Hadamard differentiable at $\bar{x}$.
\end{proof}
\begin{corollary}
Let $g:X\rightarrow \R$ be a Lipschitz continuous function near $\bar{x}$ in the real Banach space $X$ and let $S:=\{x\in X: g(x)\leq 0\}.$ Assume that $\bar{x}\in\bd S$ and $0\notin \partial g(\bar{x})$.
Then $S$ is strictly Hadamard differentiable at $\bar{x}$ if and only if there exists some $a^*\neq 0$ such that $$\partial g(\bar{x})\subset ]0,a^*].$$
\end{corollary}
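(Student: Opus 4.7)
The plan is to reduce this corollary to Theorem \ref{th0}, mirroring the proof of the preceding corollary but replacing $\Delta_S$ by $g$. The assumptions $\bar x\in\bd S$ and $0\notin\partial g(\bar x)$ already give, via Theorem \ref{tcz2}, that $S$ is epi-Lipschitzian at $\bar x$, so Theorem \ref{th0} is available. The second ingredient I need is the classical Lagrange-multiplier identity for sublevel sets of locally Lipschitz functions,
\[
N(S,\bar x)=\R_+\,\partial g(\bar x),
\]
valid precisely under the qualification $0\notin\partial g(\bar x)$; this plays the role that Corollary \ref{cor1} played for $\Delta_S$ in the previous corollary.

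For the ``only if'' direction, suppose $S$ is strictly Hadamard differentiable at $\bar x$. By Theorem \ref{th0}, $T(S,\bar x)$ contains a closed hyperplane, hence $N(S,\bar x)$, being the negative polar of $T(S,\bar x)$, is contained in a line $\R x^*$ for some $x^*\neq 0$. Taking the scalar $1$ in the identity above yields $\partial g(\bar x)\subset N(S,\bar x)\subset\R x^*$. Being a bounded closed convex subset of the line $\R x^*$ that avoids the origin, $\partial g(\bar x)$ lies on a single ray and is therefore contained in a segment $\,]0,a^*]\,$ for a suitable $a^*\neq 0$.

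For the converse, assume $\partial g(\bar x)\subset\,]0,a^*]$ with $a^*\neq 0$. Then
\[
N(S,\bar x)=\R_+\,\partial g(\bar x)\subset\R a^*,
\]
and passing to negative polars (via bipolarity of the closed convex cone $N(S,\bar x)$) shows that $T(S,\bar x)$ contains the hyperplane $\{h\in X:\langle a^*,h\rangle=0\}$. Theorem \ref{th0} then delivers strict Hadamard differentiability of $S$ at $\bar x$.

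The main obstacle is really just securing the identity $N(S,\bar x)=\R_+\,\partial g(\bar x)$; once that is in hand, the rest is formal polar duality. This identity is classical under the qualification $0\notin\partial g(\bar x)$ (see Clarke's book, or \cite{10}), and it may alternatively be obtained by combining Corollary \ref{cor1} with the observation that $\partial g(\bar x)$ and $\partial\Delta_S(\bar x)$ generate the same Clarke normal cone to $S$ at $\bar x$. One must take care throughout to use Clarke subdifferentials and Clarke normal cones consistently, so that the polar duality between $T(S,\bar x)$ and $N(S,\bar x)$ applies without caveat.
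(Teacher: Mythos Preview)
Your proposal is correct and follows essentially the same route as the paper's proof: both arguments first invoke Theorem~\ref{tcz2} to obtain the epi-Lipschitz property from $0\notin\partial g(\bar x)$, then use Theorem~\ref{th0} to translate strict Hadamard differentiability into the hyperplane condition on $T(S,\bar x)$, and finally pass between $T(S,\bar x)$ and $\partial g(\bar x)$ via the identity $N(S,\bar x)=\R_+\partial g(\bar x)$ together with polar duality. If anything, you are slightly more careful than the paper in isolating and citing this last identity, which the paper uses without comment.
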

\begin{proof}
Suppose that $S$ is strictly Hadamard differentiable. By Theorem \ref{tcz2}, we have $S$ is epi-Lipschitzian at $\bar{x}$. Therefore by Theorem \ref{th0}, there exists a closed hyperplane $H$ such that $H\subset T(S,\bar{x})$. 
This implies that
$$N(S,\bar{x})\subset \R x^*\quad\mbox{for some}\quad x^*\in X^*\setminus\{0\}.$$
That is
$$\R_+\partial g(\bar{x})\subset \R x^*.$$
Since $\partial g(\bar{x})$ is bounded and not containing $0$ we have 
$$\partial g(\bar{x})\subset ]0,a^*]\quad\mbox{for some}\;a^*\in X^*\setminus\{0\}.$$
Conversely if $\partial g(\bar{x})\subset ]0,a^*]\quad\mbox{for some}\;a^*\neq0.$ Since $0\notin \partial g(\bar{x})$, Theorem \ref{tcz2} implies that $S$ is epi-Lipschitzian at $\bar{x}$. Moreover 
$$N(S,\bar{x}) \subset \R a^*.$$
This implies that $T(S,\bar{x})$ contains a closed hyperplane. Therefore Theorem \ref{th0} implies that $S$ is strictly Hadamard differentiable at $\bar{x}$.
\end{proof}
\section{Strict Hadamard  differentiability of non epi-Lipschitz sets}
This section is concerned with the strict Hadamard differentiability of non epi-Lipschitz sets in finite dimensions. 
We start with the next theorem.
\begin{theorem}\label{th1}
Let $X$ be a finite dimensional real vector space and $S\subset X$ be a closed subset. Assume that $S$ is non epi-Lipschitzian at $\bar{x}\in S$ and that $T(S,\bar{x})$ contains a closed hyperplane $H$. Then $T(S,\bar{x})$ coincides with $H$.
\end{theorem}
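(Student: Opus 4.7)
\medskip

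The plan is to argue by contradiction and exploit Rockafellar's characterization (already quoted in the paper) that in finite dimension, $S$ is epi-Lipschitzian at $\bar{x}$ if and only if $\mathrm{int}\,T(S,\bar{x})\neq\emptyset$. Suppose, contrary to the conclusion, that $T(S,\bar{x})\neq H$. Since $H\subset T(S,\bar{x})$ by hypothesis, there exists some $v\in T(S,\bar{x})\setminus H$.

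Next I would use the convex-cone structure of $T(S,\bar{x})$. Because $H$ is a linear hyperplane (so, in particular, a vector subspace), and $T(S,\bar{x})$ is a convex cone containing both $H$ and $v$, it contains the convex conic combination
\[
H+\R_+ v=\{h+tv:h\in H,\ t\geq 0\}.
\]
Since $v\notin H$ and $X=H\oplus\R v$ (as $H$ is a hyperplane in the finite-dimensional space $X$), the set $H+\R_+ v$ is a closed half-space of $X$; in particular, it has nonempty interior, namely $H+\R_{++} v$.

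Consequently, $T(S,\bar{x})$ has nonempty interior. By Rockafellar's characterization recalled in Section~3 (valid in finite dimension), this forces $S$ to be epi-Lipschitzian at $\bar{x}$, contradicting the standing assumption. Therefore $T(S,\bar{x})=H$.

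I do not expect a genuine obstacle here; the argument is essentially one line once one notices that a convex cone containing a hyperplane plus any external direction must contain a half-space, and hence have interior. The only thing to double-check is the very mild algebraic fact that $H+\R_+ v$ already has nonempty interior as soon as $H$ is a hyperplane and $v\notin H$ in a finite-dimensional space, which is immediate from $X=H\oplus\R v$.
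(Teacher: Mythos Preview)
Your proof is correct and follows the same overall strategy as the paper: argue by contradiction, show that if $T(S,\bar{x})$ strictly contains the hyperplane $H$ then it must have nonempty interior, and invoke Rockafellar's finite-dimensional characterization to contradict the non-epi-Lipschitz hypothesis.

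The difference lies only in how the nonempty-interior step is carried out. The paper works dually: it separates the extra vector $h$ from $H$ to obtain a nonzero functional $x^*$ spanning $H^\perp$, notes that $N(S,\bar{x})\subset H^\perp=\R a^*$, and since $x^*\notin N(S,\bar{x})$ (because $\langle x^*,h\rangle>0$ with $h\in T(S,\bar{x})$) concludes that $N(S,\bar{x})$ is at most a half-line, forcing $T(S,\bar{x})$ to be a half-space. You stay primal: using only that $T(S,\bar{x})$ is a closed convex cone, you observe directly that $H+\R_+v\subset T(S,\bar{x})$ is already a half-space once $X=H\oplus\R v$. Your route is a bit more elementary---no separation theorem, no normal-cone computation---while the paper's dual argument yields the explicit description of $N(S,\bar{x})$ as a by-product, which is in keeping with the style of the surrounding sections. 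Both arguments are equally valid and essentially equivalent in spirit.
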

\begin{proof}
Assume that $\mathrm{Int}\,T(S,\bar{x})=\emptyset$ and that there exists a closed hyperplane $H$ such that $H\subset T(S,\bar{x})$.
\noindent Let $h\in T(S,\bar{x})$ such that $h\notin H$. By the Banach separation theorem there exists $x^*\in X$ and $\alpha\in\R$ such that
\begin{equation}\label{p1}
\langle x^*,h\rangle\, >\alpha\geq\langle x^*,u\rangle,\;\forall\;u\in H.
\end{equation}
This implies that
\begin{equation}\label{p2}
\langle x^*,u\rangle=0,\;\forall\;u\in H\quad\mbox{and}\quad\alpha\geq0.
\end{equation}
From \eqref{p1} and \eqref{p2} we have respectively:
$$x^*\notin N(S,\bar{x})\quad\mbox{and }\quad x^*\in H^\perp.$$
Let $a^*\neq0$ be a vector in $X$ such that $H^\perp=\R a^*$. Assume without loss of generality that $x^*\in \R_+a^*$.
Since $N(S,\bar{x})\subset H^\perp$ and  $x^*\notin N(S,\bar{x})$, we deduce that $N(S,\bar{x})=\R_{-}a^*$. Consequently
$$T(S,\bar{x})=\left\{h\in X:\langle a^*,h\rangle\geq0\right\}.$$
This is a contradiction because $\mathrm{Int}\,T(S,\bar{x})=\emptyset$. Therefore $T(S,\bar{x})=H$.
\end{proof}
The following theorem gives a geometric characterization of the strict Hadamard differentiability of mappings.
\begin{theorem}\label{strictmap} Let $X$ be a finite dimensional real vector space and  $Y$ a real Banach space. Let $g : X\mapsto Y$ be a mapping which is locally Lipschitzian around $\bar{x}$  and $\varphi: X\times Y\rightarrow Z$ be a strictly Hadamard differentiable mapping at $(\bar{x}, \bar{y})$ with a bijective derivative $D\varphi(\bar{x}, \bar{y})$ where $Z$ is a Banach space and  $\bar{y} = g(\bar{x})$. Let $S \subset Z$ be a closed set such that  $S \cap \Omega = \varphi(\gh g) \cap \Omega$ with $\Omega \subset Z$ an open set containing $\bar{z} := \varphi(\bar{x}, \bar{y})$. Then the following assertions are equivalent 
\begin{itemize}
\item[$1)$] $g$ is strictly Hadamard differentiable at $\bar{x}$,
\item[$2)$] $T(S, \bar{z})$ is isomorphic to $X$. 
\end{itemize}
\end{theorem}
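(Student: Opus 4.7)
The plan is to reduce the equivalence to the case $S = \gh g$ by transporting Clarke tangent cones through $\varphi$, and then establish the equivalence directly from the Clarke tangent cone's sequential definition together with the local Lipschitz control on $g$. Since $\varphi$ is strictly Hadamard differentiable at $(\bar{x}, \bar{y})$ with bijective continuous linear derivative, $D\varphi(\bar{x}, \bar{y}): X \times Y \to Z$ is a topological isomorphism by the open mapping theorem, and $\varphi$ is a local bi-Lipschitz homeomorphism near $(\bar{x}, \bar{y})$ by the Graves-type inverse function theorem invoked in Proposition~\ref{st}. Clarke tangent cones then transform covariantly, so
$$T(S, \bar{z}) = D\varphi(\bar{x}, \bar{y})\bigl(T(\gh g, (\bar{x}, \bar{y}))\bigr),$$
and $T(S, \bar{z})$ is isomorphic to $X$ if and only if $T := T(\gh g, (\bar{x}, \bar{y}))$ is.

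For $(1) \Rightarrow (2)$, I would set $A := Dg(\bar{x})$ and verify $T = \gh A$ by the sequential characterization: for any $(x_n, g(x_n)) \to (\bar{x}, \bar{y})$ and $t_n \to 0^+$, the sequence $\bigl(h, t_n^{-1}(g(x_n+t_n h) - g(x_n))\bigr)$ converges to $(h, A(h))$ by strict Hadamard differentiability, and the corresponding perturbed point lies in $\gh g$ by construction; conversely, every $(h, k) \in T$ forces $k = A(h)$ by the same computation. The graph $\gh A$ is clearly isomorphic to $X$ via $\pi_X$.

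For $(2) \Rightarrow (1)$, assume $T \cong X$. Then $T$ is a closed linear subspace of $X \times Y$ of dimension $n := \dim X$, and I would first show $T = \gh A$ for some continuous linear $A : X \to Y$. The projection $\pi_X\vert_T$ is injective: if $(0, k) \in T$, the Clarke tangent condition tested against the constant sequence $x_n = \bar{x}$ and $t_n \to 0^+$ yields $(u_n, v_n) \to (0, k)$ with $v_n = t_n^{-1}(g(\bar{x}+t_n u_n) - \bar{y})$, and the Lipschitz bound $\|v_n\| \le L\|u_n\| \to 0$ forces $k = 0$. Since $T$ and $X$ have the same finite dimension, $\pi_X\vert_T$ is a topological isomorphism, and $T = \gh A$ with $A := \pi_Y \circ (\pi_X\vert_T)^{-1}$.

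The final step is to upgrade $(h, A(h)) \in T$ to strict Hadamard differentiability of $g$. Fix $h \in X$ and $\epsilon > 0$. The Clarke tangent cone definition supplies $\eta > 0$ such that for every $x \in B(\bar{x}, \eta)$ and $t \in (0, \eta)$ \emph{some} $h' \in B(h, \epsilon)$ satisfies $\bigl\|t^{-1}(g(x+th') - g(x)) - A(h)\bigr\| < \epsilon$. The Lipschitz constant $L$ of $g$ then promotes this existential condition to a universal one: for \emph{every} $h'' \in B(h, \epsilon)$,
$$\left\|\frac{g(x+th'') - g(x)}{t} - A(h)\right\| \le L\|h''-h'\| + \epsilon \le (2L+1)\epsilon,$$
which is exactly the strict Hadamard differentiability condition at $\bar{x}$ in direction $h$ with value $A(h)$. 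Since this holds for every $h$ and $A$ is linear continuous, $g$ is strictly Hadamard differentiable at $\bar{x}$ with $Dg(\bar{x}) = A$. The main obstacle I anticipate is the clean justification of the tangent-cone transformation rule through $\varphi$, as standard covariance formulations are typically stated for strictly Fr\'echet differentiable diffeomorphisms; once the graph reduction is in place, the rest is a clean interplay between the Clarke tangent cone's sequential definition, the dimension count on $T$, and the Lipschitz bound on $g$.
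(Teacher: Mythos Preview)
Your proposal is correct and follows essentially the same architecture as the paper's proof: both reduce to $S=\gh g$ via the covariance relation $T(S,\bar z)=D\varphi(\bar x,\bar y)\bigl(T(\gh g,(\bar x,\bar y))\bigr)$, both identify $T(\gh g,(\bar x,\bar y))$ with the graph of a linear map (you via the projection $\pi_X|_T$ and a dimension count, the paper via an abstract isomorphism $\psi=(\psi_1,\psi_2)$ and the observation $\ker\psi_1=\{0\}$), and both finish by upgrading the Clarke tangent condition to strict Hadamard differentiability using the Lipschitz bound on $g$. The one noteworthy difference is the final step: the paper argues by contradiction, extracting a convergent subsequence of unit directions $w_n=(u_n-u_n')/\|u_n-u_n'\|$ in the finite-dimensional $X$, whereas you use the neighbourhood form of the Clarke tangent cone directly and promote the existential perturbation $h'$ to a universal one via $\|g(x+th'')-g(x+th')\|\le Lt\|h''-h'\|$, which is slightly cleaner and avoids the extraction; both routes rely on $\dim X<\infty$ at exactly one point (yours in the dimension count making $\pi_X|_T$ bijective, the paper's in the compactness extraction). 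Your caveat about the covariance rule under merely strictly Hadamard differentiable $\varphi$ is well placed---the paper asserts this relation without further justification as well.
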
 
\begin{proof}  $1) \Rightarrow 2)$ :  Suppose that  $g$ is strictly Hadamard differentiable at $\bar{x}$. Then  the following equality holds
\begin{eqnarray}\label{tang}
T(\gh(g), (\bar{x}, g(\bar{x}))) = \gh Dg(\bar{x}).
\end{eqnarray}
Indeed, let $(h, k) \in T(\gh(g), (\bar{x}, g(\bar{x})))$. Then for all $t_n\to 0^+$ there exists $h_n \to h$ and $k_n\to k$ such that 
$$g(\bar{x}+t_nh_n) = g(\bar{x}) +t_nk_n, \hbox{  for $n$ sufficiently large}.$$
Using the differentiability and the local Lipschitzness of $g$ at $\bar{x}$, we get
$$Dg(\bar{x})h = \lim_{n\to +\infty}{{g(\bar{x}+t_nh) - g(\bar{x}) }\over{ t_n}} = k.$$
Conversely, let $h\in X$, $(x_n) \subset X$, with $\displaystyle \lim_{n\to +\infty} x_n = \bar{x}$ and $t_n\to 0^+$. Then, by the strict Hadamard  differentiability of $g$
$$k:= Dg(\bar{x})h = \lim_{n\to +\infty}{{g(x_n+t_nh) - g(x_n)}\over{t_n}}.$$
Put $k_n = {{g(x_n+t_nh) - g(x_n)}\over{t_n}}$. Then $(x_n, g(x_n))+t_n(h, k_n) \in  \gh g$ for all $n$. Thus, $(h, k) \in T(\gh g, (\bar{x}, g(\bar{x})))$. \\
Since $D\varphi(\bar{x}, \bar{y})$ is bijective, we have 
\begin{eqnarray}\label{iso}
T(S, \varphi(\bar{x}, \bar{y})) = D\varphi(\bar{x}, \bar{y}) (T(\gh(g), (\bar{x}, g(\bar{x})))).
\end{eqnarray}
Now we define the mapping $\psi : X \mapsto T(S, \varphi(\bar{x}, \bar{y}))$ by 
$$\psi(h) = D\varphi(\bar{x}, \bar{y})(h, Dg(\bar{x})h).$$
Using relation (\ref{tang}), it is easy to see that $\psi$ is an isomorphism from $X$ into $T(S, \varphi(\bar{x}, \bar{y}))$. \\
\\
$2) \Rightarrow 1)$:   Let $\xi :  X \mapsto T(S, \varphi(\bar{x}, \bar{y}))$ be an isomorphism. Using relation (\ref{iso}), the mapping $\psi := D\varphi(\bar{x}, \bar{y})^{-1}\circ \xi$ is an isomorphism from $X$ to 
$T(\gh(g), (\bar{x}, g(\bar{x})))$. Let $\psi_1 : X\mapsto X$ and $\psi_2 : X\mapsto Y$ be two linear mappings such that $\psi(x) = (\psi_1(x), \psi_2(x))$ for all $x\in X$. We have for all  $h\in X$, $(\psi_1(h),\psi_2(h)) \in T(\gh(g), (\bar{x}, g(\bar{x})))$. Let $h \in X$, $x_n\to \bar{x}$  and $t_n\to 0^+$. Then  there exist $h_n \to\psi_1(h)$ and  $ k_n \to\psi_2(h)$ such that $(x_n, g(x_n)) + t_n (h_n, k_n) \in \gh g$, for all integer $n$ sufficiently large. So 
\begin{eqnarray}\label{equa}
\lim_{n\to +\infty} {{g(x_n+t_n\psi_1(h)) - g(x_n)}\over{t_n}} =\psi_2(h).
\end{eqnarray}
 We claim that the kernel, ker$\psi_1$, of $\psi_1$ is reduced to the singleton $\{0\}$. Because $\psi$ is an isomorphism, it is easy to see that $\hbox{ker}\psi_1\cap \hbox{ker}\psi_2 = \{0\}$.  Let $h\in \hbox{ker}\psi_1$. Then relation (\ref{equa}) ensures that $\psi_2(h) = 0$ and hence $h=0$. This means that $\psi_1$ is an isomorphism. 
 Put $\gamma =\psi_2\circ\psi_1^{-1}$. We claim that $\gamma $ is the strict Hadamard  derivative of $g$ at $\bar{x}$, that is,
 $$\lim_{{x\to \bar{x}}\atop{x'\to \bar{x}}} {{g(x) - g(x') - \gamma (x-x')}\over{\Vert x-x'\Vert}} = 0.$$
 So suppose the contrary, that is, there exists $\varepsilon >0$ such that 
 $$\limsup_{{x\to \bar{x}}\atop{x'\to \bar{x}}}\Vert {{g(x) - g(x') - \gamma (x-x')}\over{\Vert x-x'\Vert}} \Vert\geq\varepsilon .$$
 Let $u_n\to \bar{x}$ and $u'_n\to \bar{x}$ be such that 
$$\lim_{n\to +\infty}\Vert {{g(u_n) - g(u'_n) - \gamma (u_n-u'_n)}\over{\Vert u_n-u'_n\Vert}} \Vert \geq \varepsilon.$$
Take $t_n:= \Vert u_n-u'_n\Vert$ and $w_n = {{u_n-u'_n}\over {\Vert u_n-u'_n\Vert}}.$ Since $X$ is of finite dimension, extracting subsequence if necessary, we may assume that $w_n\to w$, with $\Vert w\Vert = 1.$ So
$$\lim_{n\to +\infty}\Vert {{g(u'_n+t_nw_n) - g(u'_n) - t_n\gamma (w_n)}\over{t_n}} \Vert \geq \varepsilon$$
or equivalently (because $g$ is locally Lipschitzian around $\bar{x}$) 
$$\lim_{n\to +\infty}\Vert {{g(u'_n+t_nw) - g(u'_n) - t_n\gamma (w_n)}\over{t_n}} \Vert \geq  \varepsilon.$$
Since $\psi_1$ is an isomorphism, there exists a unique $h\in X$ such that $w =\psi_1(h)$. Using relation (\ref{equa}), we obtain a contradiction with the last inequality. The proof is then completed.  
\end{proof}
Let us stress in the following remark an equivalence result of the strict  Hadamard differentiability for sets in  finite dimensional spaces regardless of wether the set is epi-Lipschitzian or not.
\begin{remark} Let $X$ be a finite dimensional space and $g : X\mapsto \R$ be a locally Lipschitz function around $\bar{x}$. Then $g$ is strictly Hadamard differentiable at $\bar{x}$ IFF $T(\gh g, (\bar{x}, g(\bar{x})))$ is isomorphic to $X$ and  here $X\times \{ 0\}$ is considered as an  hyperplane of $X \times \R$.\\
Therefore, let $S\subset X$ be closed subset having $g$ as its local representation. Then $S$ is strictly Hadamard differentiable at $\bar{x}$ if and only if $T(S,\bar{x})$ is isomorphic to $X$.
\end{remark}

Now, we may establish a geometrical characterization in the spirit of \cite{2} involving the contingent or Bouligand  cone. We recall that the contingent cone to a closed set $S$ at $\bar{x} \in S$  is defined as the following limit superior of the set-differential quotient
 $$K(S,\bar{x}):= \limsup_{t\to 0^+}{{S-\bar{x}}\over t}.$$
When $X$ is a finite dimensional space the contingent cone and the Clarke tangentcone are connected in the following way(see \cite{9} and \cite{23})
\begin{eqnarray}\label{KT}
 T(S, \bar{x}) =  \liminf_{\overset{S}{x\to \bar{x}}} K(S, x).
\end{eqnarray}
We say that a set-valued mapping $F : X\rightrightarrows Y$ is continuous at $\bar{x}$ if 
$$\lim_{x\to \bar{x}} F(x) = F(\bar{x})$$
where 
$$\lim_{x\to \bar{x}} F(x)  : = \limsup_{x\to \bar{x}} F(x)  = \liminf_{x\to \bar{x}} F(x).$$
Here 
$$\limsup_{x\to \bar{x}} F(x) := \{ h\in X :\, \liminf_{x\to \bar{x}} d_{F(x)}(h) = 0\}$$
and
$$\liminf_{x\to \bar{x}} F(x) := \{ h\in X :\, \lim_{x\to \bar{x}} d_{F(x)}(h) = 0\}.$$

\begin{corollary} Let $X$, $Y$ and $Z$ be  finite dimensional spaces, $g : X\mapsto Y$ be a mapping which is locally Lipschitzian around $\bar{x}$  and $\varphi: X\times Y\rightarrow Z$ be 
a strictly Hadamard differentiable mapping at $(\bar{x}, \bar{y})$ with a bijective derivative $D\varphi(\bar{x}, \bar{y})$   
 and  put $\bar{y} = g(\bar{x})$. Let $S \subset Z$ be a closed set such that  $S \cap \Omega = \varphi(\gh g) \cap \Omega$, where $\Omega \subset Z$ is an open set containing $\bar{z} := \varphi(\bar{x}, \bar{y})$. Then the following assertions are equivalent 
\begin{itemize}
\item[$1)$]  $g$ is strictly Hadamard differentiable at $\bar{x}$,
\item[$2')$] The set-valued mapping $z\rightrightarrows K(S, z)$ is continuous at $\bar{z}$ with respect to $S$ and $K(S, \bar{z})$ is isomorphic to $X$. 
\end{itemize}
\end{corollary}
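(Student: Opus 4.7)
The plan is to reduce $(2')$ to the condition $T(S,\bar z)\cong X$, which Theorem~\ref{strictmap} already equates to $(1)$. The bridge between the Clarke tangent cone and the contingent cone in finite dimensions is the identity from (\ref{KT}), namely $T(S,\bar z)=\liminf_{z\to\bar z,\,z\in S}K(S,z)$.

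The implication $(2')\Rightarrow(1)$ is immediate. Continuity of $z\rightrightarrows K(S,z)$ at $\bar z$ relative to $S$ means that $\liminf$ and $\limsup$ coincide with $K(S,\bar z)$, so (\ref{KT}) forces $T(S,\bar z)=K(S,\bar z)\cong X$, and Theorem~\ref{strictmap} then yields the strict Hadamard differentiability of $g$ at $\bar x$.

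For $(1)\Rightarrow(2')$, Theorem~\ref{strictmap} already gives $T(S,\bar z)\cong X$; what remains is the equality $K(S,\bar z)=T(S,\bar z)$ together with continuity of $z\rightrightarrows K(S,z)$ at $\bar z$. I would first transfer the analysis to $\gh g$: in finite dimensions, strict Hadamard differentiability of $\varphi$ coincides with strict Fr\'echet differentiability, and bijectivity of $D\varphi(\bar x,\bar y)$ combined with a Graves-type argument makes $\varphi$ a local bi-Lipschitz homeomorphism near $(\bar x,\bar y)$, under which contingent and Clarke tangent cones at $\bar z$ are the images by $D\varphi(\bar x,\bar y)$ of the corresponding cones at $(\bar x,g(\bar x))$. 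Now given $(h,k)\in K(\gh g,(\bar x,g(\bar x)))$, realised as the limit of $(h_n,k_n)$ with $g(\bar x+t_nh_n)=g(\bar x)+t_nk_n$ and $t_n\to 0^+$, strict Hadamard differentiability forces $k_n\to Dg(\bar x)h$, hence $k=Dg(\bar x)h$. This mimics the tangent-cone computation already performed in the proof of Theorem~\ref{strictmap}, and together with the trivial inclusion $T\subset K$ delivers $K(\gh g,(\bar x,g(\bar x)))=\gh Dg(\bar x)=T(\gh g,(\bar x,g(\bar x)))$, whence $K(S,\bar z)=T(S,\bar z)\cong X$.

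For continuity of $z\rightrightarrows K(S,z)$, the liminf part is automatic: by (\ref{KT}) and the previous step, $\liminf_{z\to\bar z,\,z\in S}K(S,z)=T(S,\bar z)=K(S,\bar z)$. For the limsup, I pass back to $\gh g$ and note that $\limsup_{x\to\bar x}K(\gh g,(x,g(x)))$ coincides with the paratingent cone of $\gh g$ at $(\bar x,g(\bar x))$; a diagonal extraction reduces any of its elements to sequences $x_n\to\bar x$, $t_n\to 0^+$, $(h_n,k_n)\to(h,k)$ with $g(x_n+t_nh_n)=g(x_n)+t_nk_n$, and strict Hadamard differentiability applied to these sequences forces $k=Dg(\bar x)h$. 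Hence the paratingent cone lies in $\gh Dg(\bar x)$, so $\limsup$ and $\liminf$ both equal $K(S,\bar z)$, establishing continuity. The main obstacle I anticipate is a clean justification of the cone transfer through $\varphi$: since $\varphi$ is assumed strictly differentiable only at the base point and not $C^1$ nearby, one must argue carefully that the bijectivity of $D\varphi(\bar x,\bar y)$ and the induced local bi-Lipschitz behaviour are enough to push contingent and Clarke cones forward by $D\varphi(\bar x,\bar y)$ at $\bar z$ and to commute this transfer with the $\liminf/\limsup$ operations over nearby points.
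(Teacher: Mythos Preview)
Your proposal is correct and follows essentially the same route as the paper: both directions pass through Theorem~\ref{strictmap} and the identity~(\ref{KT}), and for $(1)\Rightarrow(2')$ both establish $K(\gh g,(\bar x,g(\bar x)))=\gh Dg(\bar x)$, deduce the $\liminf$ inclusion from~(\ref{KT}) and~(\ref{tang}), and obtain the $\limsup$ inclusion by a diagonal extraction combined with strict Hadamard differentiability of $g$. The one place you diverge is in your handling of $\varphi$: the paper simply declares that one may assume $\varphi=\mathrm{id}_{X\times Y}$ and $Z=X\times Y$, which sidesteps entirely the cone-transfer concern you flag at the end, so adopting that reduction would spare you the delicate justification you anticipate.
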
 
\begin{proof} Using relation (\ref{KT}), it is easy to see that $2')$ implies the assertion $2)$ of Theorem \ref{strictmap}.   So it is enough to prove the implication $1) \Longrightarrow 2')$. Without loss of generality, we may assume that $\varphi = id_{X\times Y}$ and $Z = X\times Y$, where $id_{X\times Y}$ is the identity mapping of $X\times Y$. It is easy to see that 
\begin{eqnarray}\label{Kg}
K(\gh g, (\bar{x}, g(\bar{x}))) = \gh Dg(\bar{x}).
\end{eqnarray}
We will establish the following inclusions 
$$ \limsup_{x\to \bar{x}} K(\gh g, (x, g(x))) \subset K(\gh g, (\bar{x}, g(\bar{x}))) \subset \liminf_{x\to \bar{x}} K(\gh g, (x, g(x))).$$
So let $\displaystyle (u, v) \in \limsup_{x\to \bar{x}} K(\gh g, (x, g(x)))$. Then there are sequences $x_n\to \bar{x}$ and $(u_n, v_n)\to (u, v)$ such that for $n$ large enough $(u_n, v_n) \in K(\gh g, (x_n, g(x_n)))$. Thus for each integer $n$ there exist sequences $t^n_k \to 0^+$ and $(u^n_k, v^n_k) \to (u_n, v_n)$ such that $(x_n, g(x_n))+t^n_k(u^n_k, v^n_k) \in \gh g$ for all $k$ sufficiently large. Extracting a diagonal subsequence, we may assume that for some subsequence of integers $(k_n)_n$, we have $\displaystyle \lim_{n\to +\infty}t^n_{k_n} = 0$, $\displaystyle \lim_{n\to +\infty}(u^n_{k_n}, v^n_{k_n}) = (u, v)$ and 
$(x_n, g(x_n))+t^n_{k_n}(u^n_{k_n}, v^n_{k_n}) \in \gh g$ for all $n$ sufficiently large. Thus, since $g$ is strictly Hadamard  differentiable at $\bar{x}$, 
$$Dg(\bar{x}) u =  \lim_{n\to +\infty}{{g(x_n+t^n_{k_n}u^n_{k_n}) - g(x_n)}\over{t^n_{k_n}}} = v.$$
This last equality ensures that $(u, v) \in K(\gh g, (\bar{x}, g(\bar{x})))$. \\
To prove the second inclusion 
$$K(\gh g, (\bar{x}, g(\bar{x}))) \subset \liminf_{x\to \bar{x}} K(\gh g, (x, g(x))),$$ it suffices to use relations (\ref{Kg}), (\ref{KT}) and (\ref{tang}). 
\end{proof}
\vskip 0.3cm
\noindent
{\bf Acknowledgment.} The authors would like to thank the referee for useful comments
which led to an improvement of their original manuscript. Abderrahim Jourani is partially supported by the EIPHI Graduate School (contract ANR-17-EURE-0002).

\end{document}